\newtheorem{theorem}[equation]{Theorem} 
\newtheorem{definition}[equation]{Definition}
\newtheorem{lemma}[equation]{Lemma}
\newtheorem*{beck}{Beck's Spherical Cap Discrepancy Theorem}
\newtheorem{stol}[equation]{Stolarsky  Invariance Principle}
\numberwithin{equation}{section}
\begin{document}

\begin{abstract}
A sign-linear one bit map from the $ d$-dimensional sphere $ \mathbb S ^{d}$ to the  $ N$-dimensional 
Hamming cube $ H^N= \{ -1, +1\} ^{n}$ is given by 
\begin{equation*}
x \mapsto  \{   \textup{sign} (x \cdot z_j) \;:\; 1\leq j \leq N\}
\end{equation*}
where $ \{z_j\} \subset \mathbb S ^{d}$.    
For $ 0 < \delta < 1$, we estimate $ N (d, \delta )$, the smallest integer $ N$ so that there is a sign-linear map 
 which has the  $ \delta $-restricted isometric property, where we impose normalized geodesic distance on 
$ \mathbb S ^{d}$  and  Hamming metric on $ H^N$. 
Up to a polylogarithmic factor, $ N (d, \delta ) \approx \delta^{-2 + \frac2{d+1}}$, 
which has a dimensional correction in the power of $ \delta $.  
This is a question that 
arises from the one bit sensing literature, and the method of proof follows from 
geometric discrepancy theory. 
We also obtain an analogue of the Stolarsky invariance principle for this situation, which implies that minimizing the $L^2$ average of the  embedding error is equivalent to minimizing the discrete energy $\sum_{i,j} \big( \frac12 - d(z_i,z_j) \big)^2$, where $d$ is the normalized geodesic distance.
\end{abstract}

\title[One Bit Sensing and Discrepancy]{One Bit Sensing, Discrepancy,  and  Stolarsky Principle.}

% \subjclass[2000]{Primary: 42B20 Secondary: 42B25, 42B35}
% \keywords{}
\author{Dmitriy Bilyk}   %  can use \and  

\address{School of Mathematics, University of Minnesota, Minneapolis MN 55455, USA}
\email {dbilyk@math.umn.edu}
\thanks{Research supported in part by  NSF grants DMS 1101519 and DMS 1265570.}

\author{Michael T. Lacey}   %  can use \and  

\address{ School of Mathematics, Georgia Institute of Technology, Atlanta GA 30332, USA}
\email {lacey@math.gatech.edu}
%\thanks{Research supported in part by the NSF grant  DMS 1265570. }
\maketitle

\section{Introduction}

The present paper is concerned with the following question: what is the minimal number of hyperplanes  such that, for any two points on the unit sphere, the geodesic distance between them is well-approximated by the proportion of hyperplanes, which separate these points.
%We approximate, above and below,  the minimal number of hyperplanes required to guarantee that for any two points $x,y\in \mathbb S^d$ the proportion of hyperplanes that separates these points approximates the distance between the points up to a given threshold.  
This question has  connections to  different topics, such as one-bit sensing (a non-linear variant of compressive sensing),   geometric functional analysis (almost isometric embeddings), and combinatorial geometry (tesellations of the sphere), while our proof techniques are taken from geometric discrepancy theory.

%\subsection{Tessellations of the sphere} %One example of such a problem is {\it{tessellation of the sphere by hyperplanes}}. The setup is the following. 

We now introduce the notation and make this question more precise. Let $d\ge2$ and let $\mathbb S^d \subset \mathbb R^{d+1} $ denote the $d$-dimensional unit sphere. We denote by $d(x,y)$  the {\it{geodesic}} distance  between $x$ and $y$ on $\mathbb S^{d}$ normalized so that the distance between antipodal points  is $1$, i.e. 
\begin{equation}\label{e.dgeod}
d(x,y) = \frac{\cos^{-1} ( x\cdot y) }{\pi}.
\end{equation}
The  $ N$-dimensional 
Hamming cube $ H^N= \{ -1, +1\} ^{N}$ has the Hamming metric 
\begin{equation*}
d _{H} (s,t) = \frac 1 {2N} \sum_{j=1} ^{N}   \lvert  s_j - t_j\rvert   =\, \frac1{N} \cdot  \# \{  1\le j \le N:\, s_j \neq t_j \},
\end{equation*}
where $ s = (s_1 ,\dotsc, s_N) \in H^N$, and similarly for $ t$, i.e. $d_H (s,t)$ measures the proportion of the coordinates in which $s$ and $t$ differ.    We consider \emph{sign-linear} maps 
from $ \mathbb S ^{d}$ to $ H ^{N}$ given by 
\begin{equation*}
\varphi _{Z} (x) = \{  \textup{sgn} ( z_j \cdot  x ) \;:\; 1\leq j \leq N \}
\end{equation*}
where $ Z=\{ z_1, z_2 ,\dotsc,  z_N\}\subset \mathbb S ^{d}$.  Note that, with abuse of notation,  $ \varphi_Z (x) = \textup{sgn} (Ax)$,  where the rows of $ A$ consist of the vectors $ z_1 ,\dotsc, z_N$. 

Each coordinate of the map  $ \varphi _{Z}$
divides $ \mathbb S ^{d}$ into two hemispheres, and the Hamming distance 
\begin{equation*}
d _{H} (\varphi _{Z} (x), \varphi _{Z} (y))
\end{equation*}
is the proportion of of hyperplanes  $z_j^\perp$  that {\it{separate}} the points $x$ and $y$.  
It is  easy  to see  that, if one chooses a hyperplane $z^\perp$ uniformly at random, then 
\begin{equation}\label{e.crof}
 \mathbb P \big\{ \textup{sgn} (x\cdot z) \neq  \textup{sgn} (y\cdot z ) \big\} =  {d(x,y)}.
 \end{equation}%where $d(x,y)$ is the   
This is the founding instance of the Crofton formula from integral geometry \cite{MR2168892}*{p. 36--40}.
Hence for a large number of random (or carefully chosen deterministic) hyperplanes, the Hamming distance $d_H (x,y) $ should be close to the geodesic distance $d(x,y)$.

The closeness is quantified by the following  definition of the \emph{restricted isometric property (RIP)}, 
a basic concept in compressed sensing literature. 

%%%%%%%%%%%%%%%%%%%%%%%%%%%%%%  DEFINITION DEFINITION DEFINITION
\begin{definition}\label{d:rip}  Let $ 0< \delta < 1$. We say that $ \varphi : \mathbb S ^{d} \mapsto H ^{N}$ 
satisfies  \emph{$ \delta $-RIP}    if %there holds 
\begin{equation} \label{e.deltauni}
\sup _{x,y \in \mathbb S ^{d}}  \lvert   d _{H} (\varphi  (x), \varphi(y))  - d (x,y)\rvert  < \delta .  
\end{equation}

\noindent We  set   $N(d,\delta)$  to be  the minimal integer $N$, for which   there exists an $N$-point set $Z\subset \mathbb S^d$,  
such that   $ \varphi _{Z}$ is a $ \delta $-RIP map.   
\end{definition}
%%%%%%%%%%%%%%%%%%%%%%%%%%%%%%  DEFINITION DEFINITION DEFINITION

In the sign-linear case  $ \varphi = \varphi _Z$, we set 
\begin{equation}\label{Delta}
\Delta _{Z} (x,y) = d _{H} (\varphi_Z  (x), \varphi_Z (y))  - d (x,y). 
\end{equation}

Building intuition, we can set   $N _{\textup{rdm}}(d,\delta)$ to be the smallest integer $ N$ so that 
drawing $ Z$ uniformly at random, the 
sign-linear map $ \varphi _{Z}$ is a $ \delta $-RIP map, with chance at least $ 1/2$.    In a companion paper \cite{BL}, we 
conjecture, following \cite{MR3164174}, that 
\begin{equation} \label{e:random}
N _{\textup{rdm}}(d,\delta) \lesssim  d \delta ^{-2}. 
\end{equation}
Such bounds are known  for the \emph{linear} embedding of the sphere into $\mathbb R^N$ (Dvoretsky theorem).  The power of $ \delta ^{-2}$ is sharp in the random case, which follows from the Central Limit Theorem.  In \cite{BL} we prove that a $\delta$-RIP map from $\mathbb S^d$ to $\mathcal H^N$ exists for $N$ as in \eqref{e:random}, although our map is not sign-linear, but rather a composition of the ``nearest neighbor" map and a sign-linear map. We also prove an analog of \eqref{e:random} for sparse vectors. 

%For 
In this paper %, we concentrate on the entire sphere, and 
we show that in general there is a  dimensional correction to the 
 power of  $ \delta $.   This is our first main result.  

%%%%%%%%%%%%%%%%%%%%%%%%%%%%%% THEOREM THEOREM THEOREM
\begin{theorem}\label{t:polylog} For all $ d \in \mathbb N $ and $ 0 < \delta  <1 $, there holds, 
\begin{equation}\label{e.ndelta}
     N (d, \delta)  \approx  _{\textup{log}}\delta^{-2 + \frac{2}{d+1}}  .  
\end{equation} 
where the equality holds up to a dimensional constant and a polylogarithmic factor in $ \delta $. 
\end{theorem}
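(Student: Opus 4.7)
The plan is to reinterpret $\Delta_Z(x,y)$ as a geometric discrepancy and then compare it to the classical spherical cap discrepancy, where sharp Beck-type bounds of order $N^{-1/2-1/(2d)}$ are available. By the Crofton formula \eqref{e.crof}, $d(x,y)=\sigma(W(x,y))$, where $W(x,y):=H_x\triangle H_y$ is the symmetric difference of the hemispheres $H_x=\{z\in \mathbb S^d:z\cdot x>0\}$ and $H_y$, and $\sigma$ is normalized surface measure; meanwhile $d_H(\varphi_Z(x),\varphi_Z(y))=\frac{1}{N}\#(Z\cap W(x,y))$. Thus
\begin{equation*}
\Delta_Z(x,y)\;=\;\tfrac{1}{N}\#\bigl(Z\cap W(x,y)\bigr)-\sigma\bigl(W(x,y)\bigr)
\end{equation*}
is exactly the discrepancy of $Z$ with respect to the ``wedge'' $W(x,y)$, and the $\delta$-RIP condition is equivalent to the $L^\infty$ wedge discrepancy of $Z$ being at most $\delta$.

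For the upper bound, I would take $Z$ to be a point set with near-optimal spherical cap discrepancy, i.e.\ $\sup_C|D_Z(C)|\lesssim_{\log} N^{-1/2-1/(2d)}$, obtained for instance from spherical $t$-designs, jittered constructions, or uniform random points analyzed via a union bound over an $\epsilon$-net of caps. Writing $\mathbf{1}_{W(x,y)}=\mathbf{1}_{H_x}+\mathbf{1}_{H_y}-2\,\mathbf{1}_{H_x\cap H_y}$, the wedge discrepancy splits into hemisphere discrepancies (a special case of cap discrepancy, since hemispheres are caps of radius $1/2$) and the ``bigon'' discrepancy of $H_x\cap H_y$; the latter is controlled by cap discrepancy via a slicing or zonal-harmonic comparison argument. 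This gives wedge discrepancy $\lesssim_{\log} N^{-1/2-1/(2d)}$, which inverts to $N(d,\delta)\lesssim_{\log}\delta^{-2+2/(d+1)}$.

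For the lower bound, I would invoke Beck's spherical cap discrepancy theorem to produce a cap $C$ with $|D_Z(C)|\gtrsim N^{-1/2-1/(2d)}$, and then transfer this cap bound back to the wedge family. Concretely, given the hypothesis that the wedge discrepancy is less than $\delta$, one must show that the cap discrepancy is at most $\delta$ times a polylogarithmic factor. A natural route is to expand $\mathbf{1}_C$ and $\mathbf{1}_{W(x,y)}$ in zonal spherical harmonics using Funk--Hecke, writing $\mathbf{1}_C$ as an integral of wedge indicators against a bounded signed measure (modulo low-frequency/even-part corrections whose discrepancies are trivially small). Combined with Beck, this yields $\delta\gtrsim_{\log} N^{-1/2-1/(2d)}$ and hence $N\gtrsim_{\log}\delta^{-2+2/(d+1)}$.

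The main obstacle is precisely the wedge/cap transfer: wedges are ``lune-like'' symmetric differences of two hemispheres, while caps are rotationally symmetric disks, and neither family contains the other, so the passage between them is not direct and requires a harmonic-analytic or integral-geometric argument. This is also where the polylogarithmic losses in the theorem arise. As a sanity check, the $L^2$ Stolarsky principle mentioned in the abstract, when applied alone, only recovers the random exponent $-2$; the sharper correction $+2/(d+1)$ genuinely requires the $L^\infty$ Beck-type machinery rather than pure $L^2$ energy methods, which is consistent with the fact that the improvement is strongest in low dimensions $d$ and disappears as $d\to\infty$.
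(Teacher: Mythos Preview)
Your reformulation of the RIP condition as wedge discrepancy is exactly right, and for the upper bound you are essentially on target: jittered sampling gives the wedge bound directly, because a wedge boundary consists of two great $(d-1)$-spheres, so at most $O(N^{1-1/d})$ cells of a regular equal-area partition meet $\partial W_{xy}$, and Chernoff plus a union bound over an $\epsilon$-net of wedges finishes. Your detour through caps is unnecessary and the step ``bigon discrepancy is controlled by cap discrepancy'' is unjustified --- a bigon $H_x\cap H_y$ is a slice, not a cap, and there is no obvious $L^\infty$ comparison in that direction. But since you already name jittered sampling as one of your constructions, this is an inefficiency rather than an error.

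The lower bound, however, has a real gap. Wedges are \emph{symmetric} sets: $z\in W_{xy}$ iff $-z\in W_{xy}$. Hence the wedge discrepancy of $Z$ is unchanged under any sign flip $z_j\mapsto -z_j$, and in particular you can push every point of $Z$ into a single hemisphere without changing its wedge discrepancy at all --- while making its cap discrepancy of order $1$. So the implication ``small wedge discrepancy $\Rightarrow$ small cap discrepancy (up to polylog)'' that you need is simply false, and no amount of writing $\mathbf 1_C$ as a signed integral of wedge indicators will work, since wedge indicators span only even functions. The paper's fix is a symmetrization trick: pass from $Z$ to $Z^*=Z\cup(-Z)$, observe that $D(Z,W_{xy})=2\,D(Z^*,S_{xy})$ where $S_{xy}=H_x\cap H_{-y}$ is a slice, and then use Bl\"umlinger's $L^2$ inequality (slice $L^2$ discrepancy dominates cap $L^2$ discrepancy) together with Beck's $L^2$ lower bound for caps applied to the $2N$-point set $Z^*$. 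This gives $\|\Delta_Z\|_\infty\ge D_{\textup{wedge}}(Z)\gtrsim (2N)^{-1/2-1/(2d)}$ with no logarithmic loss on the lower side.
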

%%%%%%%%%%%%%%%%%%%%%%%%%%%%%% THEOREM THEOREM THEOREM

The upper bound in \eqref{e.ndelta} is achieved by exhibiting a $ Z$ of small cardinality, for which $ \varphi _Z $ satisfies  $ \delta $-RIP.  {\emph{Jittered (or stratified) sampling}}, 
a cross between purely random and deterministic constructions, 
provides the example.   Loosely speaking, first one divides the sphere $\mathbb S^d$ into $N$ roughly equal pieces, and then chooses a random point in each of them, see \S\ref{s.js} for details.  The lower bound is the universal statement that every $ Z$ of sufficiently small cardinality does not yield  a $ \delta $-RIP map. It is a deep fact from geometric discrepancy theory. 

Most of the prior work concerns randomly selected $ Z$. 
Jacques and coauthors \cite{MR3043783}*{Thm 2}  proved an analog of  \eqref{e:random}  sparse vectors in $ \mathbb S ^{d}$ with an additional logarithmic 
term in $ \delta $.  %In fact, they also considered the sparse vectors in $ \mathbb S ^{d}$. 
Plan and Vershynin \cite{MR3164174} studied this question, looking for RIP's for general subsets $ K\subset \mathbb S ^{d}$ 
into the Hamming cube, of which the sparse vectors are a prime example.  
They proved  \cite[Thm 1.2]{MR3164174} that $ N _{\textup{rdm}} (d, \delta ) \lesssim  d \delta ^{-6} $, and conjectured 
\eqref{e:random}, at least in the random case.   
Neither paper   anticipates the dimensional correction in $ \delta $ above.

Since in applications the dimension $d$ is often quite large, we considered a non-asymptotic version of the upper bound in \eqref{e.ndelta} and computed an effective value of the constant $C_d$, proving that it grows roughly as $d^{5/2}$ (see Theorem \ref{t.natess} for a more precise statement): 
\begin{equation}\label{e.ndelta1}
N(d, \delta) \le \max \Big\{ C d^{5/2}  \, \delta^{-2 + \frac2{d+1}} \, \Big( 1 + \log d + \log \frac1{\delta} \Big)^\frac{d}{d+1},\,\,\, 100d \Big\},
\end{equation}
where $C>0$ is an absolute constant.%, which can be taken equal to $C= 2400$. \\

\smallskip 
Our second main result goes in a  somewhat different direction. In Theorem \ref{t.stol} we show that the $L^2$ norm of $\Delta_Z(x,y)$ given in \eqref{Delta} satisfies an analog of the Stolarsky principle \cite{MR0333995}, which implies that minimizing the $L^2$ average of $\Delta_Z (x,y) $ is equivalent to minimizing the discrete energy of the form $\frac1{n^2} \sum_{i,j} \big( \frac12 - d(z_i,z_j) \big)^2$. 
This suggests interesting connections to such objects as spherical codes, equiangular lines, and frames. See Theorem \ref{t.stol} and \S\ref{s.stol} for details. \\

%%%%%%%%%%%%%%%%%%%%%%%%%%%%%% SUBSECTION SUBSECTION SUBSECTION SUBSECTION
 %%%%%%%%%%%%%%%%%%%%%%%%%%%%%% SUBSECTION SUBSECTION SUBSECTION SUBSECTION 
\subsection*{One-Bit Sensing}%\label{ss.}
The restricted isometry property (RIP)  was formulated by Candes and Tao \cite{MR2300700}, 
and is a basic concept in the compressive sensing \cite{MR3100033}*{Chap. 6}.  
It can be studied in various  metric spaces, and thus has many interesting variants. 

One-bit sensing was initiated by Boufounos and Baraniuk \cite{1Bit}. The motivation for the one bit measurements 
$ \textup{sgn} (x \cdot y)$ are that (a) they form a canonical non-linearity on the measurement, as well as a 
canonical quantization of data, 
(b) there are 
striking technological advances which employ non-linear observations, and (c) it is therefore of interest to develop 
a comprehensive theory of non-linear signal processing.  

The subsequent theory has been developed by \cites{13051786,MR3043783,MR3069959,MR3164174}.  
For upper bounds, random selection of points on a sphere is generally used.  Note that 
\cite{13051786}*{Thm 1} does contain a lower bound on the rate of recovery of a one-bit decoder.  
Plan and Vershynin \cite{MR3164174}  established results  on one-bit RIP maps for 
arbitrary subsets of the unit sphere, and proposed some ambitious conjectures about bounds for these maps.  
In a companion paper \cite{BL} we will investigate some of these properties in the case of randomly selected hyperplanes.  
The results about one-bit sensing have been used in other interesting contexts, see  the papers cited above as well as 
\cites{MR3008160,14078246}.   

Lower bounds,  like the ones  proved in  Theorem \ref{t:polylog},  indicate the limits of what can be accomplished 
in compressive sensing.  
See for instance Larsen--Nelson \cite{14112404} who prove a \emph{lower bound} for dimension reduction in the  Johnson--Lindenstrauss 
Lemma.    This lemma is a foundational result in dimension reduction.  
In short, it states that for $ X\subset  \mathbb S^d \subset \mathbb R ^{d+1}$ of cardinality $ k$, 
there is a linear map $ A :\; \mathbb R ^{d+1} \mapsto \mathbb R ^{N}$, which, restricted to $ X$, satisfies  $ \delta $-RIP, 
provided $ N \gtrsim  \delta ^{-2}\log k$.   This has many proofs, see for instance \cite{MR2199631}.  The connection of this lemma to compressed sensing is well known, see e.g. \cite{MR2453366}.

In our companion paper \cite{BL}, we will show that the \emph{one-bit} variant of the 
Johnson--Lindenstrauss bound holds, with the same 
bound  $ N \gtrsim  \delta ^{-2}\log k$.  It would be interesting to know if this bound is also sharp. 
The clever techniques of \cite{14112404} are essentially linear in nature, so that a new technique is needed. 
Progress on this question could have consequences on lower bounds for \emph{non-linear} Johnson--Lindenstrauss 
RIPs.  

%%%%%%%%%%%%%%%%%%%%%%%%%%%%%% SUBSECTION SUBSECTION SUBSECTION SUBSECTION
 %%%%%%%%%%%%%%%%%%%%%%%%%%%%%% SUBSECTION SUBSECTION SUBSECTION SUBSECTION 
\subsection*{Dvoretsky's Theorem}%\label{ss.}
The results of this paper are also related to Dvoretsky's Theorem \cite{MR0139079}, which states that 
for all $ \epsilon >0$ and all  dimensions $ d$, there  exists $ N = N (d, \epsilon )$ so that any Banach space $ X$ 
of dimension $ N$ contains a subspace $ Y$ of dimension $ d$ which embeds into Hilbert space with distortion 
at most $ 1 + \epsilon $.  (Finite distortion must hold uniformly at all scales, in contrast to the RIP, which 
    ignores sufficiently small scales.)  
This is a fundamental result in geometric functional analysis, and has sophisticated variants in metric spaces, 
\cites{MR2995229,MR3087345}.   

It is interesting that the argument of Plan and Vershynin \cite{MR0139079}*{\S3.2} relies upon a variant of Dvoretsky's Theorem 
and indeed ties improved bounds in Dvoretsky's Theorem to improvements in one-bit RIP maps.  In view of the connection 
between RIP properties in geometric discrepancy identified in this paper, there are new techniques that could be brought 
to bear on this question.  

%%%%%%%%%%%%%%%%%%%%%%%%%%%%%% SUBSECTION SUBSECTION SUBSECTION SUBSECTION
 %%%%%%%%%%%%%%%%%%%%%%%%%%%%%% SUBSECTION SUBSECTION SUBSECTION SUBSECTION 
\subsection*{Geometric Interpretation}%\label{ss.}
The results above can be interpreted as properties of  tessellations of the sphere $ \mathbb S ^{d}$ induced 
by the hyperplanes $ \{ z ^{\perp} \;:\; z\in Z\}$. The integer $ N (d, \delta )$ is the smallest size of 
$ Z$ so that for all $ x,y \in \mathbb S ^{d}$, the proportion of hyperplanes from $ Z$ that separate  $ x,y$ 
is bounded above and below by $ d (x,y) \pm \delta $.  This is the geometric language used in 
Plan--Vershynin \cite{MR0139079}, which indicates a connection with geometric discrepancy theory.

We point the reader to some  recent papers which investigate integration on spheres and related geometrical 
questions: \cites{150403029,MR3000572,MR3365840}

\bigskip 
In this paper, we shall denote by $\sigma$ the surface measure on the sphere, normalized so that $\sigma(\mathbb S^d) =1$. Unnormalized (Hausdorff) measure on $\mathbb S^d$ will be denoted by $\sigma^*_d$. We shall use the notation $\omega = \sigma^*_{d-1} (\mathbb S^{d-1})$ and  $\Omega = \sigma^*_{d} (\mathbb S^{d})$.  In particular 
\begin{equation*}
\Omega = \frac {2 \pi ^{\frac {d+1}2}} {\Gamma (\frac {d+1}2)},
\end{equation*}
and the ratio between these two, which will appear often,  satisfies (see \cite{MR2582801})
\begin{equation} \label{e:ratio}
\frac \omega \Omega = \frac {\Gamma (\frac {d+1}2)} {{\Gamma (\frac {d}2)} \sqrt \pi } \leq \sqrt { \frac d {2 \pi }}.  
\end{equation}
The notation $ A \lesssim B$ means that $ A \le CB$ for some fixed constant $C>0$. Occasionally, the implicit constant may depend on the dimension $d$ (this will be made clear in the context), but it is always  independent of $N$ and $\delta$.

\subsection{Discrepancy}  
We phrase the RIP property in the language of  geometric discrepancy theory  on the sphere $\mathbb S^{d}$.
 Let $Z=\{z_1 ,\dotsc, z_N \}$ be an $N$-point subset of $\mathbb S^d$. 
 The discrepancy of $ Z$ relative to a measurable subset $ S\subset \mathbb S ^{d}$ is 
\begin{equation}
D(Z,S) = \frac1{N} \cdot \# \{ Z \cap S \}  - \sigma (S).
\end{equation}
 We define the extremal ($ L ^{\infty }$) discrepancy of $Z$ with respect to a family $\mathcal S$ of measurable subsets of $ \mathbb S ^{d}$ to be 
\begin{equation}\label{e.discdef}
D_{\mathcal S} (Z) = \sup_{S\in \mathcal S} \big|D(Z,S)\big|.
\end{equation}
If the  family $\mathcal S$ admits a natural measure then one may also replace the supremum above by an $L^2$ average.  The main questions of discrepancy theory are: How small can discrepancy be? What are good or optimal point distributions? 
These questions have profound connections to approximation theory, probability, combinatorics, number theory, computer science, analysis, etc, see \cite{MR903025,MR1779341,MR1697825}.

In this sense  the quantity $\Delta_Z (x,y)$ defined in \eqref{Delta} clearly has a discrepancy flavor.
In fact, (and this is perhaps the most important  observation of the paper) the problem of uniform tessellations  can actually be reformulated as a problem on geometric discrepancy with respect to  \emph{spherical wedges}. 

Denote   the set of normals of those hyperplanes that separate $x$ and $y$ by 
\begin{equation}\label{e.wedge}
 W_{xy} = \big\{ z \in \mathbb S^{d}:\, \textup{sgn} (x\cdot z) \neq  \textup{sgn} (y\cdot z )  \big\}.
 \end{equation}
The letter $W$ stands for {\emph{wedge}}, since the set $W_{xy}$  does in fact look like a spherical wedge, i.e. the subset of the sphere lying between the hyperplanes  $x\cdot z = 0$ and $y\cdot z = 0$, see Figure \ref{f:W}.
 
 %%%%%%%%%%%%%%% Figure
\begin{figure}
\tdplotsetmaincoords{65}{110}

\pgfmathsetmacro{\rvec}{1.2}

\pgfmathsetmacro{\thetavecc}{55}
\pgfmathsetmacro{\phivecc}{35}

\pgfmathsetmacro{\thetaveccc}{39.7}
\pgfmathsetmacro{\phiveccc}{55}

\begin{tikzpicture}[scale=2,tdplot_main_coords]

\shadedraw[tdplot_screen_coords,ball color = white] (0,0) circle (\rvec);

%-----------------------
\coordinate (O) at (0,0,0);

\tdplotsetcoord{B}{\rvec}{\thetavecc}{\phivecc}

\tdplotsetcoord{C}{\rvec}{\thetaveccc}{\phiveccc}

%draw the main coordinate system axes
\draw[thick,->] (0,0,0) -- (1.05,-.6,0) node[anchor=north east]{$x$};
\draw[thick,->] (0,0,0) -- (1.2, -0.2,0) node[anchor=north east]{$y$};
%\draw[thick,->] (0,0,0) -- (0,0,1.7) node[anchor=south]{$z$};

\draw (0,.52,-0.15) node {$ \small{W _{xy}}$}; 

%\draw[-stealth,very thick,color=blue] (O) -- (B);
%

%\draw[-stealth,very thick,color=green!60!black] (O) -- (C);

%\draw[dashed, color=blue] (O) -- (Bxy);
%\draw[dashed, color=blue] (B) -- (Bxy);
%\draw[dashed, color=green!60!black] (O) -- (Cxy);
%\draw[dashed, color=green!60!black] (C) -- (Cxy);

%\tdplotdrawarc[color=blue]{(O)}{0.3}{0}{\phivecc}{anchor=north}{$\lambda_A$}

\tdplotsetthetaplanecoords{\phivecc}

%\tdplotdrawarc[color=blue,tdplot_rotated_coords]{(0,0,0)}{0.3}{90}{\thetavecc}{anchor=south west}{$\varphi_A$}

%\tdplotdrawarc[color=green!40!black]{(O)}{0.7}{0}{\phiveccc}{anchor=north}{$\lambda_B$}

\tdplotsetthetaplanecoords{\phiveccc}

%\tdplotdrawarc[color=green!40!black,tdplot_rotated_coords]{(0,0,0)}{0.7}{90}{\thetaveccc}{anchor=south west}{$\varphi_B$}
\draw[dashed] (\rvec,0,0) arc (0:360:\rvec);
\draw[thick] (\rvec,0,0) arc (0:110:\rvec);
\draw[thick] (\rvec,0,0) arc (0:-70:\rvec);

\tdplotsetthetaplanecoords{35}
\draw[thick,tdplot_rotated_coords] (\rvec,0,0) arc (0:151:\rvec);
%\draw[very thick,color=red,tdplot_rotated_coords] (\rvec,0,0) arc (0:55:\rvec);
\draw[dashed,tdplot_rotated_coords] (\rvec,0,0) arc (180:-40:-\rvec);
\draw[thick,tdplot_rotated_coords] (\rvec,0,0) arc (360:336:\rvec);

\tdplotsetthetaplanecoords{55}
\draw[thick,tdplot_rotated_coords] (\rvec,0,0) arc (0:147:\rvec);
%\draw[very thick,color=red,tdplot_rotated_coords] (\rvec,0,0) arc (0:40:\rvec);
\draw[dashed,tdplot_rotated_coords] (\rvec,0,0) arc (180:-40:-\rvec);
\draw[thick,tdplot_rotated_coords] (\rvec,0,0) arc (360:334:\rvec);

\tdplotsetrotatedcoords{-79.1}{-120}{27.3}
%\draw[very thick,color=red,tdplot_rotated_coords] (\rvec,0,0) arc (0:21:\rvec);

\end{tikzpicture}

\caption{The spherical  wedge $ W _{xy}$. }
\label{f:W}
\end{figure}
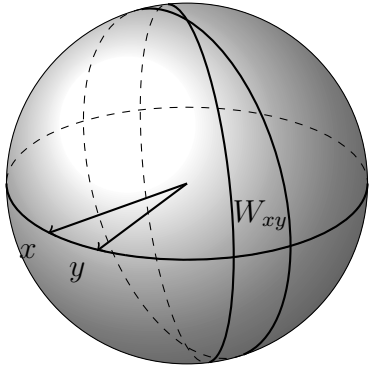
%%%%%%%%%%%%%%% Figure

%Let $\sigma$ denote the {\emph{normalized} surface measure on the sphere $S^{d-1}$, i.e. ${\int\limits_{S^{d-1}}} d\sigma = 1$. 
It follows from  the Crofton formula \eqref{e.crof} that 
\begin{equation}
\sigma (W_{xy}) =  \mathbb P ( z^\perp \textup{ separates } x \textup{ and } y ) =  d(x,y).
\end{equation}
Therefore we can rewrite the quantity \eqref{Delta} as 
\begin{equation}\label{Delta1}
\Delta_Z (x,y) = \frac{\#  (Z\cap W_{xy}) }{N} \ - \sigma (W_{xy})\, = \, \frac{1}{N} \sum_{k=1}^N {\bf 1}_{W_{xy} } (z_k)  - \sigma (W_{xy}) \eqqcolon D \big(Z, W_{xy}\big),
\end{equation} 
i.e. the  discrepancy of the $N$-point distribution $Z$ with respect to the wedge $W_{xy}$, see \S\ref{s.mr}.

The RIP property can now  be reformulated  in terms of the  $L^\infty$ discrepancy with respect to wedges. Indeed, according to definitions \eqref{e.deltauni} and \eqref{e.discdef}, the map $ \varphi _{Z}$ is  $ \delta $-RIP 
exactly when the quantity below is at most $ \delta $:
\begin{equation}\label{Delta11}
\big\|  \Delta_Z   \big\|_\infty  = \sup_{x,y \in \mathbb S^{d}}  \left| \frac{\#  (Z\cap W_{xy}) }{N} \ - \sigma (W_{xy}) \right| \eqqcolon  D_{\textup{wedge}} (Z) .
\end{equation}
The problem of estimating $N(d,\delta)$ is thus simply inverse to obtaining discrepancy estimates in terms of $N$, and this is precisely the approach we shall take.

%The authors of \cite{MR3164174} considered an inverse problem: what is the minimal size of $Z$ so that $\Delta (Z) < \delta$, in other words, what is the minimal number $N$ of hyperplanes so that the Hamming distance approximates the geodesic distance uniformly up to $\delta$. They conjectured that $N \approx \delta^{-2}$.\\

%It appears however that this conjecture is, in general, not true. While the {\emph{theory of empirical processes}} suggests that this should be the right relation for {\bf{random}} tessellations by hyperplanes, {\emph{discrepancy theory}} seems to suggest that in the general case there should be a dimensional correction.\\

\subsection{A point of reference: spherical cap discrepancy}

We recall the classical results concerning the discrepancy for spherical caps.
For $x\in \mathbb S^d$, and $t\in [-1,1]$, let $C(x,t)$ be the spherical cap  of height $t$ centered at $x$,  given by 
\begin{equation}
C(x,t) = \{ y\in \mathbb S^d:\, y\cdot x \ge t\}.
\end{equation}
Denote the set of all spherical caps by $\mathcal C$. For an $N$-point set $Z \subset \mathbb S^d$ let
\begin{equation}
D_{\textup{cap}} (Z) = \sup_{C\in \mathcal C }   \big| D \big(Z, C \big)  \big| =  \sup_{C\in \mathcal C }     
\Bigl\lvert \frac{\#  (Z\cap C ) }{N} \ - \sigma (C)  \Bigr\rvert
\end{equation} 
be the extremal  discrepancy of $Z$ with respect to spherical caps $\mathcal C$. The following classical results  due to J. Beck \cite{MR736726,MR762175} yield almost precise information about the growth  of this quantity in terms of $N$.

\begin{beck}\label{l1}%[Beck's Theorem on Spherical Cap Discrepancy] 
 For dimensions $ d \geq 2$, there holds 
%%  ENUMERATE
\begin{description}
\item[Upper Bound] There exists an $N$-point set $Z\subset \mathbb S^{d}$ with spherical cap discrepancy 
\begin{equation}\label{e.t1}
D_{\textup{cap}} (Z) \lesssim N^{-\frac12 - \frac{1}{2{d}}} \sqrt{\log N}.
\end{equation} 

\item[Lower Bound]  For any $N$-point set $Z\subset \mathbb S^{d}$  the spherical cap discrepancy satisfies
\begin{equation} \label{e:lower}
D_{\textup{cap}} (Z) \gtrsim  N^{-\frac12 - \frac{1}{2{d}}}.
\end{equation} 
\end{description}
%% ENUMERATE
\end{beck}

 We will elaborate on the upper bound \eqref{e.t1}.  
It  is proved  using a construction known as \emph{jittered sampling}, which produces  a semi-random point set. We describe this construction in much detail  in \S \ref{s.js}. The proof of \eqref{e.t1} in  \cite{MR762175} states that ``...using probabilistic ideas it is not hard to show..." and refers to   \cite{MR736726}, where this fact is proved for rotated rectangles, not spherical caps.  (It is well known that jittered sampling is  applicable in many geometric settings.)
In the book \cite{MR903025} the algorithm is described in some more detail, but one of the key steps, namely regular 
equal-area partition of the sphere, is only postulated. This construction was only recently rigorously formalized and  effective values of the underlying constants have been found \cites{MR1900615,MR2582801}. See \S \ref{s.rps} for further discussion.

It is generally believed that standard low-discrepancy sets, while providing  good bounds with respect to the number of points $N$, yield very bad, often exponential,  dependence on the dimension. However, as we shall see, it appears that for jittered sampling this behavior is quite reasonable (see also \cite{151000251} for a discussion of a similar effect).  This is consistent with the fact that  this construction is intermediate between purely random and deterministic sets. 

Since we are interested  both in asymptotic and non-asymptotic regimes, we shall explore this construction (in the case of spherical wedges) very scrupulously tracing the dependence of the constant on the dimension.

The proof of the lower bound \eqref{e:lower} is Fourier-analytic in nature and  holds with the smaller  $L^2$ average in place of the supremum, 
\begin{equation}
D_{\textup{cap}, L^2} (Z) = \left( \int_{-1}^1 \int_{\mathbb S^d}    
\Bigl\lvert \frac{\#  \big(Z\cap C(x,t) \big) }{N} \ - \sigma \big(C(x,t)\big)  \Bigr\rvert
^2 d\sigma(x) \,dt \right)^{1/2}. 
\end{equation}
More precisely, a   lower bound stronger than \eqref{e:lower} holds:  
\begin{equation}\label{e.BeckL2}
D_{\textup{cap},L^2} (Z) \gtrsim  N^{-\frac12 - \frac{1}{2{d}}}.
\end{equation}
This bound is sharp:  
the $L^2$ discrepancy of   jittered sampling yields a bound akin to \eqref{e.t1}, but without $\sqrt{\log N}$.

Strikingly,  minimizing the $ L ^2 $ discrepancy is the same as maximizing  the sum of pairwise distances between 
the vectors in $ Z$, which is the main result of  \cite{MR0333995}.   

\begin{stol}\label{t.originalstol}%[Stolarsky  Invariance Principle]
In all dimensions $d\ge 2$, for any $N$-point set $Z = \{z_1,\dotsc,z_N \} \subset \mathbb S^d$, the following holds 
\begin{equation} \label{e:stol}
\frac1{c_d} \big[ D_{\textup{cap}, L^2} (Z)  \big]^2 =   \int_{\mathbb S^{d}} \int_{\mathbb S^{d}} \|  x- y \| \, d\sigma (x)\, d\sigma (y) - \frac{1}{N^2} \sum_{i,j = 1}^N \| z_i - z_j \| ,
\end{equation}
where $\| \cdot \|$ is the Euclidean norm and $c_{d} = \frac12 \int_{\mathbb S^{d}} |p \cdot z| d\sigma (z) = \frac{1}{d} \frac{\omega}{\Omega}$ for an arbitrary pole $p\in \mathbb S^d$.
\end{stol}

The square of the $L^2$ discrepancy  is exactly  the difference between the continuous potential  energy given by $ \lVert x-y\rVert$ and the discrete energy induced by the points of $Z$.   Alternate proofs of Stolarsky principle   can be found in \cite{MR3034434,B}.

\subsection{Main results}\label{s.mr}

Analogues of Beck's discrepancy estimates \eqref{e.t1} and \eqref{e:lower}, as well as of the  Stolarsky invariance principle hold for spherical wedges $W_{xy}$. These in turn imply results for sign-linear RIP maps.  Moreover, we shall explore the dependence of the upper estimates on the dimension $d$.
Recall the definition of a wedge $ W _{xy}$ in \eqref{e.wedge} and of the wedge discrepancy  \eqref{Delta1}-\eqref{Delta11}. 
\begin{equation*}
D_{\textup{wedge}} (Z) = \sup_{x,y\in \mathbb S ^{d} }   \big| D \big(Z, W _{xy} \big)  \big| =  
\sup_{x,y\in \mathbb S ^{d} }
\Bigl\lvert \frac{\#  (Z\cap W _{xy} ) }{N} \ - \sigma (W _{xy}) \Bigr\rvert. 
\end{equation*}

\begin{theorem}\label{t.wupper} 
For all integers $ d \geq 2$,  there are  $ B_d, \, C_d >0$ so that for all  integers $ N\geq 1$, 
%%  ENUMERATE
\begin{description}
\item[Upper Bound] There exists a distribution of $N$ points $Z  \subset \mathbb S^d$ with 
\begin{equation}\label{e.wupper}
D_{\textup{wedge}}(Z) \le C_d N^{-\frac12 - \frac{1}{2d}} \sqrt{\log N}.
\end{equation}
Provided $ N \ge 100 d$, we have $ C_d \leq  20 d^{\frac34 +\frac1{4d}}  $. 

\item[Lower Bound]  For any $ Z \subset \mathbb S ^{d}$ with cardinality $ N$, 
\begin{equation}\label{e.wlower}
D_{\textup{wedge}} (Z) \ge B_d N^{-\frac12 - \frac{1}{2d}}. 
\end{equation}
\end{description}
%% ENUMERATE

\end{theorem}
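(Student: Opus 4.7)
\emph{Upper bound via jittered sampling.} My plan is to partition $\mathbb S^{d}$ into $N$ cells $V_1,\dots,V_N$ of equal measure $1/N$ and diameter $\lesssim_{d} N^{-1/d}$, using the area-regular constructions to be recalled in \S\ref{s.rps}, and draw $z_k$ independently and uniformly from $V_k$. For a fixed wedge $W=W_{xy}$, cells entirely inside $W$ or disjoint from $W$ contribute nothing to $D(Z,W)$, so
\begin{equation*}
N\cdot D(Z,W)\,=\!\sum_{k\in\mathcal B(W)}\!\bigl(\mathbf 1_W(z_k)-N\sigma(V_k\cap W)\bigr),
\end{equation*}
a sum over \emph{boundary cells} $\mathcal B(W)=\{k: V_k\cap\partial W\neq\emptyset,\,V_k\not\subset W\}$ of independent, centered, $[-1,1]$-valued random variables. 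Since $\partial W\subset x^{\perp}\cup y^{\perp}$ has $(d-1)$-Hausdorff measure at most $2\omega$, the total $\sigma$-measure of these boundary cells is $\lesssim_{d} N^{-1/d}$, hence $|\mathcal B(W)|\lesssim_{d}N^{1-1/d}$. Hoeffding then gives $\mathbb P(|D(Z,W)|>t)\le 2\exp(-c_{d}N^{1+1/d}t^{2})$ for each fixed $W$.

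The family of wedges is parametrized by $\mathbb S^{d}\times\mathbb S^{d}$, so an $N^{-1}$-net has cardinality $\lesssim N^{2d}$; a short continuity estimate (the symmetric difference $W_{xy}\triangle W_{x'y'}$ has $\sigma$-measure $\lesssim d(x,x')+d(y,y')$, and the extra points caught by this symmetric difference are essentially boundary contributions) transfers the Hoeffding bound from the net to all wedges. A union bound then exhibits a realization of $Z$ obeying \eqref{e.wupper}. Carefully optimizing the partition diameter constant, the surface area bound $2\omega$ via \eqref{e:ratio}, and the $\sqrt{\log\#\text{net}}\sim\sqrt{2d\log N}$ loss should produce the stated $C_{d}\le 20\,d^{3/4+1/(4d)}$ when $N\ge 100d$.

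\emph{Lower bound.} Since $L^{\infty}\geq L^{2}$, it suffices to bound below the $L^{2}$-averaged wedge discrepancy
\begin{equation*}
\int_{\mathbb S^{d}}\!\int_{\mathbb S^{d}}D(Z,W_{xy})^{2}\,d\sigma(x)\,d\sigma(y).
\end{equation*}
The plan is to write $\mathbf 1_{W_{xy}}(z)=\mathbf 1_{H_{x}}(z)+\mathbf 1_{H_{y}}(z)-2\mathbf 1_{H_{x}}(z)\mathbf 1_{H_{y}}(z)$, where $H_{u}=\{z\in\mathbb S^{d}:z\cdot u>0\}$ is the hemisphere with pole $u$, and decompose in spherical harmonics in $z$ via the Funk--Hecke formula. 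The harmonic coefficients of $\mathbf 1_{H_{u}}$ vanish on even nonzero levels by antipodal symmetry and decay like $\ell^{-(d+2)/2}$ on odd levels -- exactly the rate that drives Beck's $L^{2}$ lower bound \eqref{e.BeckL2} for caps. By Parseval, the $L^{2}$ wedge discrepancy becomes a weighted Sobolev-type norm of $\mu-\sigma$ where $\mu=\frac{1}{N}\sum\delta_{z_{j}}$, and an argument parallel to that of \cite{MR762175} lower bounds this norm by $N^{-1-1/d}$. Alternatively, one may invoke the Stolarsky identity of Theorem \ref{t.stol} to rewrite the $L^{2}$ discrepancy as a continuous-minus-discrete energy and bound it below by the same Fourier considerations.

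\emph{Expected main obstacle.} The upper bound is a jittered-sampling argument in structure, but extracting the sharp dimensional constant $C_{d}\le 20d^{3/4+1/(4d)}$ requires careful bookkeeping in three places: the diameter of the equal-area partition, the $(d-1)$-Hausdorff measure of $\partial W$, and the cardinality of the net on $\mathbb S^{d}\times\mathbb S^{d}$. For the lower bound, the technical crux is the Funk--Hecke/Parseval analysis of the wedge indicator: one must verify that the relevant spherical harmonic coefficients of $\mathbf 1_{W_{xy}}$ survive -- rather than cancel -- upon averaging over $(x,y)$, so that Beck's cap argument can be transported to wedges essentially verbatim.
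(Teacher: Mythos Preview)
Your upper bound is structurally the same jittered-sampling argument the paper carries out in \S\ref{s.dwjs}: equal-area partition, Hoeffding on boundary cells, union bound over a finite family. One technical difference worth noting: you propose a net on $\mathbb S^{d}\times\mathbb S^{d}$ together with a continuity estimate, whereas the paper uses an $\varepsilon$-\emph{bracketing} (approximating family, Lemma~\ref{l:wedge}): for each wedge one finds $A\subset W\subset B$ with $\sigma(B\setminus A)<\varepsilon$, so that $|D(Z,W)|\le\max\{|D(Z,A)|,|D(Z,B)|\}+\varepsilon$ holds \emph{deterministically}. Your net step still requires controlling the random quantity $\#\bigl(Z\cap(W_{xy}\triangle W_{x'y'})\bigr)$, which needs a separate argument; bracketing sidesteps this entirely. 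Both routes lead to the same constants.

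Your lower bound route is genuinely different from the paper's, and considerably harder. The paper does \emph{not} redo any Fourier analysis for wedges. Instead it observes that if $Z^{*}=Z\cup(-Z)$ is the symmetrization of $Z$, then
\[
D(Z,W_{xy})=D(Z,S_{xy})+D(Z,S_{-x,-y})=D(Z,S_{xy})+D(-Z,S_{xy})=2\,D(Z^{*},S_{xy}),
\]
so $D_{\textup{wedge}}(Z)\ge 2\,D_{\textup{slice}}(Z^{*})$. The slice lower bound is then quoted from Bl\"umlinger \cite{MR1116689}, who showed $D_{\textup{slice},L^{2}}\gtrsim D_{\textup{cap},L^{2}}$, and Beck's cap bound \eqref{e.BeckL2} finishes. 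This two-line reduction replaces the entire Funk--Hecke computation you outline; in particular the ``survival of harmonic coefficients upon averaging over $(x,y)$'' issue you flag never arises. Your direct harmonic-analysis plan is plausible in principle, but the paper's symmetrization trick is the intended shortcut.
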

%A similar result %(with different absolute constants, but the same exponents of $N$) 
%holds for  the implicit constants in Beck's  spherical cap discrepancy estimate \eqref{e.t1}. 

Both inequalities are known in a very similar geometric situation. It was proved by Bl\"umlinger \cite{MR1116689} that the upper bound \eqref{e.wupper} holds for the discrepancy with respect to spherical ``slices". For $x$, $y\in \mathbb S^d$ denote 
\begin{equation}
S_{xy}  = \{ z \in \mathbb S^d :\, z\cdot x >0,\, z\cdot y <0 \}.
\end{equation}    
In other words, the slice $S_{xy}$ is a half of the wedge $W_{xy}$. It should be noted that the discrepancy with respect to slices is in fact  a better measure of equidistribution on the sphere  than the wedge discrepancy (the wedge discrepancy doesn't change if we move all points to the hemisphere $\{ x\cdot p \ge 0 \}$ by changing some points $x$ to $-x$). Using jittered sampling in a manner almost identical  to Beck's, Bl\"umlinger showed that there exists $Z\subset \mathbb S^d$, $\# Z = N$, such that
\begin{equation}\label{e.ublu}
D_{\textup{slice}} (Z) = \sup_{x,y \in \mathbb S^d} \big| D (Z, S_{xy}) \big| \lesssim N^{-\frac12-\frac{1}{2d}} \sqrt{\log N}.
\end{equation}
Like Beck's estimate, this  bound did not say anything about the dependence of constants on the dimension. Without any regard for constants, the main  estimate \eqref{e.wupper} of Theorem \ref{t.wupper} follows immediately since 
\begin{equation}
D (Z, W_{xy} ) = D (Z, S_{xy}) + D(Z, S_{-x,-y}), \textup{ and hence }\,\,  D_{\textup{wedge}} (Z) \le 2 D_{\textup{slice}} (Z).
\end{equation}
An effective value of the constant $C_d$ in Theorem \ref{t.wupper}, which is important for uniform tessellation and one-bit compressed sensing problems, requires much more delicate considerations and constructions, some of which only became available recently, see \S\ref{s.rps}.

Bl\"umlinger also showed that the lower bound \eqref{e.wlower} holds for the slice discrepancy. The proof uses spherical harmonics and is quite involved (Matou\v{s}ek \cite{MR1697825} writes that `it would be interesting to find a simple proof'). In fact, it is proved that the $L^2$-discrepancy for slices is bounded below by the $L^2$-discrepancy for spherical caps, from which the result follows by Beck's estimate \eqref{e.BeckL2}:
\begin{equation}\label{e.sliceL2}
D_{\textup{slice}} (Z)  \gtrsim  D_{\textup{slice}, L^2} (Z) \gtrsim  D_{\textup{cap},L^2} (Z) \gtrsim  N^{-\frac12 - \frac{1}{2{d}}}.
\end{equation}
The lower bound  for spherical wedges can be deduced by the following symmetrization argument.

%%%%%%%%%%%%%%%%%%%%%%%%%%%%%% PROOF PROOF PROOF
\begin{proof}[Proof of \eqref{e.wlower}]
For a point set $Z = \{ z_1, ..., z_N\} \subset \mathbb S^d $, consider its symmetrization, i.e. a $2N$-point set $Z^* = Z \cup (-Z)$. It is easy to see that 
\begin{equation}
D(Z, W_{xy} ) = D(Z, S_{xy}) + D(Z, S_{-x,-y}) =  D(Z, S_{xy}) +  D( - Z, S_{xy})  =  2 D (Z^*, S_{xy}).
\end{equation}
Therefore, 
\begin{equation}
 D_{\textup{wedge}} (Z) \ge 2 D_{\textup{slice}} (Z^*) \gtrsim (2N)^{-\frac12 - \frac{1}{2{d}}},
\end{equation}
which proves \eqref{e.wlower}. 

\end{proof}
%%%%%%%%%%%%%%%%%%%%%%%%%%%%%% PROOF PROOF PROOF

\vskip5mm

Inverting the bounds of Theorems \ref{t.wupper}, one immediately obtains the first announced result \eqref{e.ndelta},   asymptotic bounds on the minimal dimension of a  sign-linear $\delta$-RIP from $ \mathbb S ^{d}$ to the 
Hamming cube. 

\begin{theorem}\label{t.natess}
There exists an absolute constant $C>0$ (independent of the dimension) such that in every dimension $d\ge 2$ and for every $\delta >0$,  the the integer   $N(d,\delta)$ of Definition~\ref{d:rip} satisfies 
\begin{equation}
b_d \delta^{-2 + \frac2{d+1}}  \leq N (d,\delta)  \le \max  \left\{ 100 d, \,\, C d^\alpha \,\, \delta^{-2 + \frac2{d+1}}  \, \Bigl(  1 + \log d + \log \frac1{\delta}  \Bigr)^{\frac{d}{d+1}} \right\},
\end{equation}
where $\alpha = \frac52 - \frac{2}{d+1}$, and $ b_d >0$.  
\end{theorem}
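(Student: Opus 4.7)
The proof will invert the wedge discrepancy bounds of Theorem~\ref{t.wupper} via the identity $\|\Delta_Z\|_\infty = D_{\textup{wedge}}(Z)$ recorded in \eqref{Delta11}, which says that $\varphi_Z$ is a $\delta$-RIP map precisely when $D_{\textup{wedge}}(Z) < \delta$. Consequently $N(d,\delta)$ is the smallest $N$ for which some $N$-point set $Z \subset \mathbb{S}^d$ realizes wedge discrepancy strictly below $\delta$, and both halves of the theorem translate directly from the two halves of Theorem~\ref{t.wupper}.

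The lower bound is a one-line rearrangement. If $\varphi_Z$ is $\delta$-RIP with $\#Z = N$, then \eqref{e.wlower} forces $B_d\, N^{-1/2 - 1/(2d)} \leq D_{\textup{wedge}}(Z) < \delta$, equivalently $N > B_d^{2d/(d+1)}\, \delta^{-2 + 2/(d+1)}$. Setting $b_d = B_d^{2d/(d+1)}$ concludes this half.

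For the upper bound, I will exhibit a jittered-sampling configuration supplied by \eqref{e.wupper} that achieves $D_{\textup{wedge}}(Z) \leq \delta$. Under the hypothesis $N \geq 100d$ the effective constant satisfies $C_d \leq 20\, d^{3/4 + 1/(4d)}$, so it suffices to solve the transcendental inequality
\begin{equation*}
C_d\, N^{-1/2 - 1/(2d)} \sqrt{\log N} \leq \delta.
\end{equation*}
I plan to resolve this with the standard bootstrap ansatz
\begin{equation*}
N_\star = K\, \bigl(C_d/\delta\bigr)^{2d/(d+1)} \bigl(\log(C_d/\delta)\bigr)^{d/(d+1)}
\end{equation*}
for an absolute constant $K$ chosen large enough to absorb losses. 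Substituting shows $\log N_\star \lesssim \log d + \log(1/\delta)$, and the exponent $d/(d+1) < 1$ comfortably swallows the bounded multiplicative error incurred when $\log N_\star$ is replaced by $\log(C_d/\delta)$ inside the ansatz. Expanding $C_d^{2d/(d+1)}$ produces a $d$-power of the form $d^{(3d+1)/(2(d+1))}$, which I will further dominate by $d^{\alpha}$ with $\alpha = 5/2 - 2/(d+1)$ for notational cleanliness. The remaining small-parameter regime, where $N_\star < 100d$ and the effective $C_d$ bound is no longer guaranteed, is handled simply by falling back on $N = 100d$, which is what the outer $\max$ encodes.

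The only mildly delicate step is the bootstrap inversion of the transcendental inequality; everything else is elementary constant-chasing, and the output can be cross-checked against the non-asymptotic formulation \eqref{e.ndelta1} stated in the introduction.
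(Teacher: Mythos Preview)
Your proposal is correct and follows the same route as the paper: both the lower and upper bounds are obtained by inverting the two halves of Theorem~\ref{t.wupper} through the identification \eqref{Delta11}. Your bootstrap ansatz $N_\star = K(C_d/\delta)^{2d/(d+1)}(\log(C_d/\delta))^{d/(d+1)}$ is essentially what the paper does (it writes down an explicit threshold and checks it), and in fact your version yields the slightly sharper exponent $d^{(3d+1)/(2(d+1))}$ before you relax it to $d^{\alpha}$; the paper picks up the extra $d^{d/(d+1)}$ because it inserts a factor of $(d{+}1)$ inside the logarithm to make the verification $L\ge\log L$ trivial.
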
 

The  absolute constant $C$ above can be taken to be e.g. $C=4000$. Some details are given in the end of \S\ref{s.dwjs}.

In a different vein, we also obtain a variant of the Stolarsky Invariance Principle  \eqref{e:stol}.

\begin{theorem}[Stolarsky principle for wedges]\label{t.stol} For any finite set $Z=\{z_1, \dots , z_N \} \subset \mathbb S ^{d}$, the  following relation holds
\begin{equation}\label{e.stol}
\big\| \Delta_Z (x,y) \big\|_2^2  =   \frac{1}{  N^2}  \sum_{i,j=1}^N \bigg(\frac{1}2 -  d (z_i, z_j) \bigg)^2   -   \int\limits_{\mathbb S ^{d}}  \int\limits_{\mathbb S ^{d}}   \bigg(\frac{1}2 -  d (x,y) \bigg)^2  d\sigma (x) \, d\sigma(y)  .
\end{equation}
\end{theorem}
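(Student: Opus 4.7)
The plan is to reduce the identity to Crofton's formula \eqref{e.crof} and some Fubini bookkeeping, with the function $g(u,v):=\tfrac12-d(u,v)$ taking over the role that the Euclidean distance plays in the classical Stolarsky principle. First I would rewrite the wedge indicator in the sign form
\begin{equation*}
\mathbf 1_{W_{xy}}(z) = \tfrac12\bigl(1 - \textup{sgn}(x\cdot z)\,\textup{sgn}(y\cdot z)\bigr),
\end{equation*}
which recasts \eqref{e.crof} as the integral identity
\begin{equation*}
g(u,v) = \tfrac12\int_{\mathbb S^d} \textup{sgn}(u\cdot z)\,\textup{sgn}(v\cdot z)\,d\sigma(z).
\end{equation*}
Combining the two gives a compact representation for the error:
\begin{equation*}
\Delta_Z(x,y) = g(x,y) - \frac{1}{2N}\sum_{k=1}^N \textup{sgn}(x\cdot z_k)\,\textup{sgn}(y\cdot z_k).
\end{equation*}

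Next I would square this representation and integrate over $(x,y)\in\mathbb S^d\times\mathbb S^d$ with respect to $\sigma\otimes\sigma$, producing three contributions: the constant $\|g\|_2^2$, a double sum coming from the quadratic term in the sum, and a cross term. The double sum factors via Fubini as
\begin{equation*}
\frac{1}{4N^2}\sum_{k,l} \Bigl(\int_{\mathbb S^d} \textup{sgn}(x\cdot z_k)\,\textup{sgn}(x\cdot z_l)\,d\sigma(x)\Bigr)^2 = \frac{1}{N^2}\sum_{k,l} g(z_k,z_l)^2,
\end{equation*}
where the inner $x$-integral is evaluated as $2g(z_k,z_l)$ using the sign-integral representation of $g$ above.

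For the cross term I would once more substitute the sign representation of $g(x,y)$, interchange the order of integration, and use the same identification $\int \textup{sgn}(x\cdot w)\,\textup{sgn}(x\cdot z_k)\,d\sigma(x) = 2g(w,z_k)$. This reduces the cross term to
\begin{equation*}
-\frac{2}{N}\sum_{k=1}^N \int_{\mathbb S^d} g(z_k,w)^2\,d\sigma(w) = -2\,\|g\|_2^2,
\end{equation*}
where the final equality uses rotation invariance of $\sigma$ to conclude that $w\mapsto \int g(u,w)^2\,d\sigma(w)$ is independent of $u$ and therefore equals $\|g\|_2^2$. Summing the three contributions gives $\|g\|_2^2 - 2\|g\|_2^2 + N^{-2}\sum_{k,l} g(z_k,z_l)^2$, which is exactly \eqref{e.stol} after substituting $g=\tfrac12-d$.

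I do not expect any genuine obstacle: the proof is essentially an exercise in expanding a square and iterating Crofton. The only points that require care are (i) verifying that the pure ``linear'' cross terms in $\textup{sgn}(x\cdot u)$ alone vanish by antipodal symmetry ($\int \textup{sgn}(x\cdot u)\,d\sigma(x)=0$), and (ii) being mindful that the identity $\int g(u,w)^2\,d\sigma(w)=\|g\|_2^2$ requires the rotation-invariance appeal rather than Fubini alone. The real conceptual content is the preliminary step of noticing that $g=\tfrac12-d$ has a positive-definite kernel representation in terms of sign functions, which is what forces the square $(\tfrac12-d)^2$ to appear on the right-hand side of \eqref{e.stol}.
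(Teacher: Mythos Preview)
Your proposal is correct and follows essentially the same route as the paper's proof: the sign representation of $\mathbf 1_{W_{xy}}$, Crofton's identity $\int \textup{sgn}(u\cdot z)\,\textup{sgn}(v\cdot z)\,d\sigma(z)=1-2d(u,v)$, squaring out, Fubini, and rotation invariance. Your introduction of $g=\tfrac12-d$ at the outset is a mild streamlining---the paper instead expands $\Delta_Z=\frac1N\sum\mathbf 1_{W_{xy}}-d$ directly, which is why it explicitly uses $\int\textup{sgn}(p\cdot x)\,d\sigma(x)=0$ (your caveat (i)) to kill the linear terms and then invokes $\int d(x,y)\,d\sigma(x)=\tfrac12$ at the end to repackage the answer in terms of $(\tfrac12-d)^2$; in your formulation these steps are absorbed into the definition of $g$ and never surface separately.
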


Minimizing the $L^2$ average of the wedge discrepancy associated to the tessellation of the sphere is thus equivalent to minimizing the discrete potential energy of $Z$ induced by the  potential  $P(x,y) = \big( \frac12 - d(x,y) \big)^2$. Intuitively, one would like to make the elements of $Z$ `as orthogonal as possible' on the average.

First of all, this suggests natural candidates for tessellations that are good or optimal on the average, e.g., spherical codes (sets $X\subset \mathbb S^d$ such that  all $x$, $y\in X$ satisfy $x\cdot y < \mu$ for some parameter $\mu \le 1$), see \cite[Chapter 5]{MR2848161} and references therein, or equiangular lines (sets $X\subset \mathbb S^d$ such that  all $x$, $y\in X$ satisfy $|x\cdot y| = \mu$ for some fixed  $\mu \in [0,1)$), see \cite{CasTrem}.

This also brings up connections to frame theory. Benedetto and Fickus \cite{MR1968126} proved that a set $Z= \{z_1,\dotsc, z_N \} \subset \mathbb S^d$  forms a normalized tight frame  (i.e. there exists a constant $A>0$ such that for every $x \in \mathbb R^{d+1}$ an analog of Parseval's identity holds: $A \| x\|^2 = \sum_{i=1}^N | x\cdot z_i |^2  $) if and only if $Z$ is a minimizer of a discrete energy known as the {\emph{total frame potential}}:
\begin{equation}
TP (Z)  = \sum_{i,j=1}^N |z_i \cdot z_j |^2 ,
\end{equation}
which looks unmistakably similar to the discrete energy on the right-hand side of \eqref{e.stol}.

It is not known yet whether the minimizers of \eqref{e.stol} admit a similar geometric or functional-analytic characterization, or if some of the known distributions yield reasonable values for this energy. These are interesting questions  to  be addressed in future research.  We prove Theorem \ref{t.stol} in \S\ref{s.stol}.

\section{Jittered sampling}\label{s.js}

{\emph{Jittered (or stratified)}} sampling in discrepancy theory and statistics can be viewed as a semi-random construction, somewhat   intermediate  between the purely random Monte Carlo algorithms and the purely deterministic low discrepancy point sets. It is easy to describe the main idea in just a few words: initially, the ambient manifold (cube, torus, sphere etc) is subdivided into $N$ regions of equal volume and (almost) equal diameter, then a point is chosen uniformly at random in each of these pieces, independently of others.

Intuitively, this construction guarantees that the arising point set is fairly well distributed (there are no clusters or large gaps). Amazingly, it turns out that in many situations this distribution yields nearly optimal discrepancy (while purely random constructions are far from optimal, and deterministic sets are hard to construct). As mentioned before, the construction in Theorem  \ref{l1} is precisely the jittered sampling. It differs from the corresponding lower bound only by a factor of $\sqrt{\log N}$, which is a result of the application of large deviation inequalities. If one replaces the $L^\infty$ norm of the discrepancy by $L^2$, jittered sampling actually easily gives the sharp upper bound (without $\sqrt{\log N}$). Similar phenomena persist in other situations (discrepancy with respect to  balls or rotated rectangles in the unit cube, slices on the sphere etc).

Jittered sampling is very well described in classical references on discrepancy theory, such as \cite{MR903025,MR1779341,MR1697825}. However, since the spherical case possesses certain subtleties, and, in addition, we want to trace the dependence of the constants on the dimension, we  shall describe the construction in full detail. Besides, this procedure will also yield a quantitative bound on the constant in the classical spherical caps discrepancy estimate \eqref{e.t1}.

In order to make the construction precise we need to introduce two notions: {\emph{equal-area partitions with bounded diameters}} and {\emph{approximating families}}.

\subsection{Regular partitions of the sphere}\label{s.rps}

Let $S_i \subset \mathbb S^{d}$, $i=1,2 ,\dotsc, N$. We say that $\{ S_i \}_{i=1}^N$ is a partition of the sphere if $\mathbb S^{d}$ is a disjoint (up to measure zero) union of these sets, i.e. $\displaystyle{\mathbb S^{d} = \bigcup_{i=1}^N S_i }$ and $\sigma( S_i \cap S_j ) = 0$ for $ i \neq j$.

\begin{definition}
Let  $\mathcal S = \{ S _i\}_{i=1}^N $ be a partition of $\mathbb S^{d}$.  We call it an \emph{equal-area partition}  if $\sigma(S_i) = \frac{1}{N}$ for each $i=1,\dotsc, N$.
\end{definition}

\begin{definition}
Let  $\mathcal S = \{ S _i\}_{i=1}^N $ be an equal-area  partition of $\mathbb S^{d}$.  We say that it is a \emph{regular partition} (or, an equal area partition with bounded diameters) with constant $ K_d >0$  if  for every $i=1,\dotsc, N,$
$$\textup{diam} (S_i) \le K_d N^{-\frac{1}{d}}. $$
\end{definition}

In the case of the unit cube $[0,1)^d$, regular partitions are extremely easy to construct. Indeed, for $N=M^{d}$, one can simply take disjoint squares of side length $M^{-1}= N^{-1/d}$. The situation is  more complicated  for the sphere $\mathbb S^d$.
Nevertheless, there is an explicit upper bound on the constant $ K_d$ above.  

%%%%%%%%%%%%%%%%%%%%%%%%%%%%%% THEOREM THEOREM THEOREM
\begin{theorem}\label{t:leopardi}[Leopardi \cite{MR2582801}]   For all $ N \in \mathbb N $ there exist  regular partitions 
$  \{ S _i\}_{i=1}^N$ of         $\,\mathbb S^d$ with the constant $ K_d$  given by
\begin{equation}\label{const1}
K_d = 8 \bigg( \frac{\Omega d}{\omega} \bigg)^{\frac1{d}},
\end{equation}
where as before $\Omega$ is the $d$-dimensional Lebesgue surface measure of $\mathbb S^d$, and $\omega$ is the $(d-1)$-dimensional measure of $\mathbb S^{d-1}$. 
\end{theorem}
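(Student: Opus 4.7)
The plan is to build the partition recursively on the dimension $d$ via a zonal equal-area construction. The base case $d=1$ is trivial: I divide $\mathbb{S}^1$ into $N$ equal arcs of length $2\pi/N$, which satisfies the claimed bound directly. For the inductive step on $\mathbb{S}^d$, I first remove two polar caps of area $\Omega/N$ centered at the north and south poles; the required polar angle is $\theta_c \sim (d\Omega/(\omega N))^{1/d}$, so each cap already has diameter of the optimal order $N^{-1/d}$. I then slice the remaining equatorial belt by parallels of latitude into $n$ collars of equal latitudinal width $\Delta\theta = (\pi-2\theta_c)/n$, where $n$ is chosen so that $\Delta\theta$ is a prescribed constant multiple of $N^{-1/d}$.

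Inside each collar at mean colatitude $\theta_i$ with area $A_i \approx \omega(\sin\theta_i)^{d-1}\Delta\theta$, I would set $m_i \in \mathbb{N}$ close to $NA_i/\Omega$, arranged so that $\sum_i m_i$ together with the two polar caps equals $N$. I then subdivide the collar by applying the inductive construction to the ``middle $(d-1)$-sphere'' of radius $\sin\theta_i$, and lift the resulting cells through the natural product structure of the collar. After a small latitudinal perturbation of the bounding parallels to compensate for the integer rounding, each cell has area exactly $\Omega/N$. Its diameter splits into a latitudinal component of size at most $\Delta\theta$ and a longitudinal component of size at most $\sin\theta_i \cdot K_{d-1} m_i^{-1/(d-1)}$. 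Substituting the expression for $m_i$ makes the factor $\sin\theta_i$ cancel, leaving a longitudinal bound of order $K_{d-1}(\Omega/(\omega \Delta\theta N))^{1/(d-1)}$; optimizing $\Delta\theta$ to balance these two contributions yields the claimed bound $\mathrm{diam}(S_i)\le K_d N^{-1/d}$.

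The main obstacles are twofold. The first is that the integer rounding in the choice of $m_i$ and $n$ must be reconciled with the exact equal-area requirement. The standard fix is to perturb the bounding parallels of each collar slightly, which is straightforward but requires extra care in the collars near the poles where $\sin\theta_i$ is small, and where the relative rounding error in $m_i$ is largest. The second and harder obstacle is propagating the explicit constant $8$ through the recursion: one must carefully track the dimensional factors using the ratio $\omega/\Omega$ from \eqref{e:ratio} and Wallis-type identities for $\int_0^\pi (\sin\theta)^{d-1}\,d\theta$, in order to verify that the inductive bound is exactly $K_d = 8(\Omega d/\omega)^{1/d}$ rather than something worse. This careful bookkeeping, rather than a deeper conceptual issue, is where the principal work lies, and it is the reason a fully effective version of the bound was only established relatively recently.
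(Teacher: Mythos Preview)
The paper does not give its own proof of this theorem: it is quoted verbatim as a result of Leopardi \cite{MR2582801}, with the remark that the explicit value of $K_d$ ``can be easily extracted from the proof, see page 9'' of that reference, together with a brief history of the construction (Stolarsky asserted existence without proof, Feige--Schechtman gave the first complete construction, Leopardi computed the effective constant). So there is nothing in the present paper to compare your argument against.

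That said, what you have sketched \emph{is} essentially the Leopardi (and Feige--Schechtman) recursive zonal equal-area construction: two polar caps, slicing the remaining belt into collars of controlled latitudinal width, and applying the $(d-1)$-dimensional construction inductively to each collar's middle sphere, with a final adjustment of the bounding parallels to repair the integer rounding. Your identification of the two real difficulties---the rounding near the poles where $\sin\theta_i$ is small, and the bookkeeping needed to push the explicit constant $8(\Omega d/\omega)^{1/d}$ through the induction---matches exactly what the paper's historical remarks emphasize as the content of \cite{MR2582801}. Your sketch is thus correct in spirit and in outline; it is simply the cited external proof rather than an alternative to anything in the paper itself.
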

%%%%%%%%%%%%%%%%%%%%%%%%%%%%%% THEOREM THEOREM THEOREM

%\todo[inline]{The reference doesn't state the result as above. And, the point seems to be that $ K_d$ is bounded 
%by a constant, which is how Leopardi states his result. Perhaps state the bound here?}

Admittedly, Leopardi states the result   without the specific value of the constant $K_d$. However, it can be easily extracted from the proof, see page 9 in \cite{MR2582801}.

The history of this issue (as described in  \cite{MR2280380}) is  interesting:  Stolarsky \cite{MR0333995} asserts the existence of regular partitions of $\mathbb S^d$ for all $d\ge 2$, but offers no construction or proof of this fact. Later, Beck and Chen \cite{MR903025} quote Stolarsky, and Bourgain and Lindenstrauss \cite{MR981745} quote Beck and Chen. A complete construction of a regular partition of the sphere in arbitrary dimension was given by Feige and Schechtman \cite{MR1900615}, and Leopardi \cite{MR2582801} found an effective   value of the constant in their construction, which we quote above. 
%\begin{equation}\label{const1}
%K_d = 8 \bigg( \frac{\Omega d}{\omega} \bigg)^{\frac1{d}},
%\end{equation}
%where as before $\Omega$ is the $d$-dimensional Lebesgue surface measure of $\mathbb S^d$, and $\omega$ is the $(d-1)$-dimensional measure of $\mathbb S^{d-1}$. % It is easy to see that $\Omega/\omega \approx d^{-1/2}$, hence
%\begin{equation}\label{const2}
%K_d \approx d^{\frac1{2d}}
%\end{equation}
%with implicit constants independent of the dimension. 

\subsection{Approximating families}\label{s.af}
We approximate  an  infinite family of sets (e.g., all spherical caps, or wedges) 
by finite families.  This will facilitate the use of a union bound estimate in the next section.  

%In order to apply  the union  bound and to pass from a large deviation estimate for a single set in the collection to the whole family, one needs to reduce matters from an infinite family of sets (e.g., all spherical caps, wedges etc) to an appropriately close finite family.

\begin{definition}
Let $\mathcal S$ and  $\mathcal Q$ be two collections of subsets of $\mathbb S^d$. We say that  $\mathcal Q$ is an 
\emph{$\varepsilon$-approximating family}  (also known as $\varepsilon$-bracketing) for $\mathcal S$ if for each $S \in \mathcal S$ there exist sets $A$, $B\in \mathcal Q$ such that 
\begin{equation}
A \subset S \subset B \,\,\,\,\, \textup{ and } \,\,\, \sigma (B\setminus A) < \varepsilon.
\end{equation}
\end{definition}

It is easy to see that for any $N$-point set $Z$  in $\mathbb S^d$, if $S$, $A$, $B$ are as in the definition above then the discrepancies of $Z$ with respect to these sets satisfy 
\begin{equation}\label{e.approx}
|D(Z,S)| \le \max \{ |D(Z,A)|, |D(Z,B) | \} + \varepsilon.
\end{equation}
Hence $D_{\mathcal S} (Z)  \le D_{\mathcal Q} (Z) + \varepsilon$.
Thus for $\varepsilon \le N^{-1}$, the discrepancy with  respect to the original family is of the same order as the discrepancy with respect to the  $ \epsilon $-approximating family.

Constructions of  finite approximating families  are obvious in some cases, e.g. axis-parallel boxes in the unit cube or spherical caps: just take the same sets with rational parameters with small denominators.

For the spherical wedges, which is our case of interest, we have the following Lemma. 

%%%%%%%%%%%%%%%%%%%%%%%%%%%%%% LEMMA LEMMA LEMMA
\begin{lemma}\label{l:wedge}  For any $ 0< \varepsilon < 1$, and integers $ d \geq 1$, 
there is an approximating family $ \mathcal Q$ for the 
the collection of spherical wedges $ \{ W _{xy} \;:\; x,y \in \mathbb S ^{d}\}$  with 
\begin{equation}\label{e.afamily} 
\#  \mathcal Q    \le  {(Cd)}^{d+1} \varepsilon^{-2(d+1)}, 
\end{equation}
where $0< C \leq 82$ is an absolute constant.  
\end{lemma}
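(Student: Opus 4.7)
The plan is to parametrize wedges by the ordered pairs $(x,y)\in\mathbb S^d\times\mathbb S^d$ of normal vectors, replace these pairs by elements of a fine $\eta$-net $\mathcal N\subset\mathbb S^d$, and build matching inner/outer brackets by shifting the two defining half-spaces inward or outward by a margin of size $\eta$.  The size of the approximating family will then be controlled by $|\mathcal N|^2$, and $\eta$ will be calibrated so that the bracketing error is below $\varepsilon$.

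\textbf{Step 1 (net).}  First I would fix a Euclidean $\eta$-net $\mathcal N\subset\mathbb S^d$: a set of minimal cardinality so that every $x\in\mathbb S^d$ has some $\tilde x\in\mathcal N$ with $\|x-\tilde x\|\le\eta$.  By the standard volumetric argument in the ambient $\mathbb R^{d+1}$,
\begin{equation*}
|\mathcal N|\;\le\;\Bigl(1+\tfrac{2}{\eta}\Bigr)^{d+1}\;\le\;\Bigl(\tfrac{3}{\eta}\Bigr)^{d+1},
\end{equation*}
valid for $0<\eta<1$.

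\textbf{Step 2 (bracketing wedges).}  For each ordered pair $(\tilde x,\tilde y)\in\mathcal N\times\mathcal N$ I would define
\begin{align*}
A_{\tilde x,\tilde y} &= \bigl\{z:\tilde x\cdot z>\eta,\ \tilde y\cdot z<-\eta\bigr\}\;\cup\;\bigl\{z:\tilde x\cdot z<-\eta,\ \tilde y\cdot z>\eta\bigr\},\\
B_{\tilde x,\tilde y} &= \bigl\{z:\tilde x\cdot z>-\eta,\ \tilde y\cdot z<\eta\bigr\}\;\cup\;\bigl\{z:\tilde x\cdot z<\eta,\ \tilde y\cdot z>-\eta\bigr\},
\end{align*}
and take $\mathcal Q$ to be the collection of all these sets (so $|\mathcal Q|\le 2|\mathcal N|^2$).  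Given any wedge $W_{xy}$, choose $\tilde x,\tilde y\in\mathcal N$ with $\|x-\tilde x\|,\|y-\tilde y\|\le\eta$; since $|x\cdot z-\tilde x\cdot z|\le\eta$ and $|y\cdot z-\tilde y\cdot z|\le\eta$ for all $z\in\mathbb S^d$, a short case analysis yields $A_{\tilde x,\tilde y}\subset W_{xy}\subset B_{\tilde x,\tilde y}$.

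\textbf{Step 3 (bracketing error).}  The symmetric difference $B_{\tilde x,\tilde y}\setminus A_{\tilde x,\tilde y}$ is contained in the union of the two bands $\{|\tilde x\cdot z|\le\eta\}\cup\{|\tilde y\cdot z|\le\eta\}$.  Each band has $\sigma$-measure at most $\frac{2\eta\omega}{\Omega}$, since
\begin{equation*}
\sigma\{z:|\tilde x\cdot z|\le\eta\}=\frac{\omega}{\Omega}\int_{-\eta}^{\eta}(1-t^2)^{(d-2)/2}\,dt\le\frac{2\eta\omega}{\Omega}.
\end{equation*}
Using \eqref{e:ratio}, I conclude $\sigma(B_{\tilde x,\tilde y}\setminus A_{\tilde x,\tilde y})\le\frac{4\eta\omega}{\Omega}\le 4\eta\sqrt{d/(2\pi)}$.

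\textbf{Step 4 (calibration and constant).}  To ensure the bracketing error is below $\varepsilon$, I would set $\eta=c\,\varepsilon/\sqrt{d}$ with $c=\sqrt{\pi/8}$.  Then
\begin{equation*}
|\mathcal Q|\;\le\;2|\mathcal N|^2\;\le\;2\Bigl(\tfrac{3\sqrt{d}}{c\,\varepsilon}\Bigr)^{2(d+1)}\;=\;2\Bigl(\tfrac{9\,d}{c^2}\Bigr)^{d+1}\varepsilon^{-2(d+1)},
\end{equation*}
and $2(9/c^2)^{d+1}\le (Cd)^{d+1}\varepsilon^{-2(d+1)}$ for an absolute $C\le 82$ (in fact any $C\ge 9\sqrt{2}/c^2=72\sqrt{2}/\pi\approx 32.4$ suffices, so the target $C\le 82$ is comfortable).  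The only real subtlety is making sure that ``shifting both defining half-spaces by $\eta$'' really produces sandwich approximations—this is exactly what the sign case analysis in Step 2 verifies—so no step here should be a genuine obstacle, only a careful bookkeeping of constants.
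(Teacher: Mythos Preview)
Your argument is correct and essentially identical to the paper's proof: both take an $\eta$-net on $\mathbb S^d$, define inner and outer brackets by shifting the two bounding half-spaces by $\pm\eta$, and control the bracket gap via the measure of two equatorial bands $\{|\tilde x\cdot z|\le\eta\}$, $\{|\tilde y\cdot z|\le\eta\}$. The only cosmetic differences are your slightly sharper covering constant ($3/\eta$ versus the paper's $4/\eta$) and the fact that the paper solves for the net scale directly as $\gamma=\Omega\varepsilon/(4\omega)$ before invoking $\omega/\Omega\le\sqrt{d/(2\pi)}$, which is why it lands at $C\approx 82$ while your bookkeeping gives $C\approx 33$.
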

%%%%%%%%%%%%%%%%%%%%%%%%%%%%%% LEMMA LEMMA LEMMA

%%%%%%%%%%%%%%%%%%%%%%%%%%%%%% PROOF PROOF PROOF
\begin{proof}
We construct two separate families: one for interior and one for exterior approximation of the spherical wedges.
Let $\mathcal N(\varepsilon)$ be the covering number   of  $\mathbb S^d$ with respect to  the Euclidean metric, in other words, the cardinality of the smallest   set $\mathcal H_\varepsilon$ such that for each $x\in \mathbb S^d$ there exists $z\in \mathcal H_\varepsilon$ with $\| x- z\| \le \varepsilon$.

A simple volume argument (see e.g. \cite{MR2963170}) shows that
\begin{equation}
\mathcal N(\varepsilon ) \le \left( 1 + \frac2{\varepsilon} \right)^{d+1} \le \left( \frac4{\varepsilon} \right)^{d+1} .
\end{equation}
(More precise estimates can be obtained, in particular  by using $d$-dimensional, rather than $(d+1)$-dimensional volume arguments, but this will suffice for our purposes). 

We shall construct an $\varepsilon$-approximating family as follows. Start with an $\gamma$-net $\mathcal H_{\gamma}$ of size $\mathcal N( \gamma)$, where $\gamma >0$ is to be specified. For $x$, $y\in \mathbb S^d$, define the exterior enlargement and interior reduction of $W_{xy}$ as
\begin{align*}
W_{xy}^{\textup{ext}} (\gamma) & = \{ p\in \mathbb S^{d}:\, p\cdot  x  \ge -\gamma, \, p\cdot y  \le \gamma\} \bigcup \{ p\in \mathbb S^{d}:\, p\cdot  x  \le \gamma, \, p\cdot y  \ge - \gamma\} \supset W_{xy}\\
W_{xy}^{\textup{int}} (\gamma) & = \{ p\in \mathbb S^{d}:\, p\cdot  x  \ge \gamma, \, p\cdot y  \le  - \gamma\} \bigcup \{ p\in \mathbb S^{d}:\, p\cdot  x  \le - \gamma, \, p\cdot y  \ge  \gamma\} \subset W_{xy}.
\end{align*}
We claim that the collection $$ \mathcal Q =  \{ W_{xy}^{\textup{int}} (\gamma) : \, x,y \in \mathcal H_\gamma\} \cup \{ W_{xy}^{\textup{ext}} (\gamma): \, x,y \in \mathcal H_\gamma\}$$ forms an approximating family for the set of all wedges $\{  W_{xy}: \, x,y \in \mathbb S^d\}$.  Indeed, let $x'$, $y' \in \mathbb S^d$. Choose $x$, $y\in \mathcal H_\gamma$ so that $\| x-x'\| < \gamma$, $\| y - y'\| \le \gamma$.   Then it is easy to see that  $ W_{xy}^{\textup{int}}  \subset W_{x'y'} \subset W_{xy}^{\textup{ext}}$.  Indeed, e.g. if $p\cdot x' \ge 0$, then $p\cdot x = p\cdot x' - p\cdot (x'-x) \ge - \gamma$, the rest is similar. 

Moreover, it is easy to see that the normalized measure of a `tropical belt' around the equator satisfies  $$\sigma \big(\{ p\in \mathbb S^d:\, |p\cdot x| \le \gamma \} \big) \le \frac{2\gamma \omega}{\Omega}.$$ Therefore we can estimate $$ \sigma  \big(W_{xy}^{\textup{ext}} (\gamma) \setminus W_{xy}^{\textup{int}} (\gamma)\big) \le  \frac{4 \omega \gamma}{\Omega}, $$
hence we have an $\varepsilon$-approximating family with $\varepsilon = \frac{4 \omega \gamma}{\Omega}$, i.e. $\gamma = \frac{\Omega \varepsilon}{4 \omega}$.

The cardinality of this family satisfies the bound
\begin{equation}%\label{e.afamily}
\# \mathcal Q  = 2 \big(\mathcal N(\gamma)\big)^2 \le 2 \left( \frac{4}{\gamma} \right)^{2(d+1)}  =  2^{8d+9} \left( \frac{\omega}{\Omega} \right)^{2(d+1)} \cdot \varepsilon^{-2(d+1)} \le  {(Cd)}^{d+1} \varepsilon^{-2(d+1)},
\end{equation}
where $C>0$ is an absolute constant which can be taken to be, e.g. $C=82$, and we  have used the standard  fact  \eqref{e:ratio}.  

\end{proof}
%%%%%%%%%%%%%%%%%%%%%%%%%%%%%% PROOF PROOF PROOF

%\vskip1cm
%$\mathcal H_\varepsilon$ THE CONSTRUCTION OTLINE GOES HERE THE SIZE OF THE FAMILY SHOULD BE OF THE ORDER $$C_d N^{cd}.$$ The value of $C_d$ is important, $c$ not so much.

\subsection{The spherical  wedge discrepancy of jittered sampling. Proof of Theorem \ref{t.wupper}}\label{s.dwjs}
%We prove the  analog of Lemma \ref{l1} for spherical wedges

The algorithm, which we describe for the case of spherical wedges, is generic and applies to many other situations. 
We shall need the following version of the classical Chernoff--Hoeffding large deviation bound (see e.g. \cite{MR1779341,MR1697825}).
\begin{lemma}\label{hoefding}
Let $p_i \in [0,1]$, $i=1,2 ,\dotsc, m$. Consider centered independent random variables $X_i$, $i=1 ,\dotsc, m$ such that $\mathbb P (X_i = - p_i) = 1- p_i$ and $\mathbb P (X_i = 1- p_i) = p_i$. Let $X=\sum_{i=1}^m X_i$. Then for any $\lambda >0$
\begin{equation}\label{e.hoef}
\mathbb P (|X| > \lambda )  < 2 \exp \bigg( - \frac{2\lambda^2}{m} \bigg).
\end{equation}
\end{lemma}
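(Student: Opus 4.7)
The plan is to follow the classical Chernoff exponential moment method: apply Markov's inequality to $e^{tX}$, factorize via independence, bound each one-variable moment generating function by Hoeffding's lemma, optimize the free Chernoff parameter $t$, and finally symmetrize to convert the one-sided tail into the two-sided bound with the factor of $2$ in \eqref{e.hoef}.

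Concretely, for fixed $t>0$ the exponential Markov inequality gives
\[
\mathbb{P}(X > \lambda) \le e^{-t\lambda}\,\mathbb{E}[e^{tX}] = e^{-t\lambda}\prod_{i=1}^{m} \mathbb{E}[e^{tX_i}],
\]
where the factorization uses the independence of the $X_i$'s. Since each $X_i$ is centered and supported on the interval $[-p_i,\, 1-p_i]$ of length exactly $1$, the one-variable Hoeffding bound will yield $\mathbb{E}[e^{tX_i}] \le e^{t^{2}/8}$, so $\mathbb{P}(X>\lambda) \le \exp(-t\lambda + m t^{2}/8)$. The choice $t = 4\lambda/m$ minimizes the exponent, collapsing it to $-2\lambda^{2}/m$. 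A mirror argument applied to $-X$ (note that $-X_i$ satisfies the lemma's hypothesis with $p_i$ replaced by $1-p_i$) gives the matching lower tail, and a union bound produces \eqref{e.hoef}.

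The only genuinely nontrivial step is Hoeffding's one-variable lemma: for a centered random variable $Y$ supported on $[a,b]$, one must show $\mathbb{E}[e^{tY}] \le \exp\bigl(t^{2}(b-a)^{2}/8\bigr)$. I would establish this by the standard cumulant route: define $\psi(s) = \log \mathbb{E}[e^{sY}]$, observe $\psi(0)=\psi'(0)=0$, then rewrite $\psi''(s)$ as the variance of $Y$ under the tilted measure $d\mu_{s} \propto e^{sY} d\mu$ and apply Popoviciu's inequality to bound any variance on $[a,b]$ by $(b-a)^{2}/4$; a double integration in $s$ from $0$ to $t$ then yields the claimed bound. This is the one place where care is needed, but it is a classical computation — the remainder of the argument is just bookkeeping with the Chernoff parameter and a symmetrization step.
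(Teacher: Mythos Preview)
Your argument is the standard and correct proof of Hoeffding's inequality via the Chernoff exponential moment method; each step (Markov on $e^{tX}$, factorization by independence, Hoeffding's one-variable lemma with range $1$, optimization at $t=4\lambda/m$, and symmetrization) is valid. The paper itself does not prove this lemma at all---it is stated as a classical fact with references to \cite{MR1779341,MR1697825}---so there is nothing to compare against, and your proof simply supplies what the paper takes for granted.
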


We start with a regular partition $\{ S_i\}_{i=1}^N$ of the sphere as described in \S \ref{s.rps}, i.e. $\mathbb S^d = \cup_{i=1}^N S_i$, $\sigma (S_i \cap S_j ) = 0$ for $i \neq j$, $\sigma ( S_i ) = 1/N $, and $\textup{diam} (S_i) \le K_d N^{-1/d}$ for all $i=1 ,\dotsc, N$. 

We now construct the set $Z=\{ z_1 ,\dotsc,  z_N\}$ by choosing independent random points $z_i \in S_i$ according to the uniform distribution on $S_i$, i.e. $N\cdot \sigma\, \vline_{\, S_i}$. 

Let $\mathcal Q$ be a $1/N$-approximating family for the family $\mathcal R$ of interest, in our case the family of spherical wedges $\{ W_{xy}: x,y \in \mathbb S^d\}$. The size of this family, as discussed in \S \ref{s.af},  satisfies $\# \mathcal Q \le A_d N^{\alpha_d}$. According to \eqref{e.afamily} we may take $A_d = (Cd)^{d+1}$ and $\alpha_d = 2(d+1)$.

Consider a single set $Q \in \mathcal Q$. It is easy to see that for those $i=1,\dotsc,N$, for which $S_i \cap \partial Q = \emptyset$ ($S_i$ lies completely inside or completely outside of $Q$), the input of $z_i$ and $S_i$ to the discrepancy of $Z$ with respect to $Q$ is zero. In other words,
\begin{equation}\label{e.discsum}
D(Z, Q)  =  \frac1{N}\sum_{i: S_i \cap \partial Q \neq \emptyset } \big( \mathbf 1 _{Q} (z_i)  - N \sigma(S_i \cap Q)  \big) =  \frac1{N}  \sum_{i=1}^m X_i,
\end{equation}
where $X_i $ are exactly as in Lemma \ref{hoefding} with $p_i = N \cdot \sigma (S_i \cap Q)$, and $m \le M$, where $M$ is the maximal number of sets $S_i$ that may intersect the boundary of any element of $\mathcal Q$.

It is now straightforward to estimate $M$ for spherical wedges. Let $\varepsilon = K_d N^{-1/d}$. Since every $S_i$ has diameter at most $\varepsilon$, all the sets $S_i$ which intersect $\partial Q$ are contained in the set $\partial Q + \varepsilon B$, where $B$ is the unit ball. Recall that  $\sigma^*_d $ denotes the unnormalized Lebesgue measure on the sphere, and that we defined $\Omega = \sigma^*_d (\mathbb S^{d})$ and $\omega = \sigma^*_{d -1} (\mathbb S^{d-1})$. We then have 
\begin{equation}
M\cdot \frac{\Omega }{N} \le \sigma^*_d (\partial Q + \varepsilon B) \le \sigma^*_{d-1} (\partial Q) \cdot 2\varepsilon \le 8 K_d N^{-1/d} \omega.
\end{equation}
Hence, invoking the  diameter bounds for the regular partition \eqref{const1},   we find that 
\begin{equation}\label{e.M}
M \le \frac{8K_d \omega}{\Omega} \cdot  N^{1 - \frac1{d}}  \le  64  d^{\frac{1}{d}} \bigg( \frac{\omega }{\Omega} \bigg)^{1-\frac1{d}} N^{1-\frac1{d}}.
\end{equation}

Choosing the parameter $\lambda = ({\alpha_d} \cdot M)^\frac12 \sqrt{\log N}$, then invoking the representation \eqref{e.discsum} and the large deviation estimate \eqref{e.hoef}, we find that for any given $Q \in \mathcal Q$
\begin{equation}
\mathbb P (|D(Z, Q) | > {\lambda}/N ) = \mathbb P ( |X| > \lambda ) \le 2 N^{-2 \alpha_d}. 
\end{equation}

Since $\# Q_d \le A_d N^{\alpha_d}$,  the union bound yields
\begin{equation}
\mathbb P ( | D(Z, Q) | > \lambda/N \textup{ for at least one  } Q \in \mathcal Q) \le 2A_d N^{-\alpha_d} < 1,
\end{equation}
whenever $N> (2A_d)^{1/\alpha_d}$.  Therefore, for such $N$, there exists $Z$ such that 
\begin{equation}\label{e.upperapprox}
\sup_{Q \in \mathcal Q} | D(Z, Q)  | \le  N^{-1} (\alpha_d M)^\frac12 \sqrt{\log N} \le 8 \sqrt{\alpha_d}\, d^{\frac{1}{2d}} \bigg( \frac{\omega }{\Omega} \bigg)^{\frac12-\frac1{2d}} N^{-\frac12-\frac1{2d}} \sqrt{\log N},
\end{equation}
i.e. the discrepancy estimate of the form \eqref{e.wupper}  holds for each member of the approximating family $Q\in \mathcal Q$  with constant $ 8 \sqrt{\alpha_d}\, d^{\frac{1}{2d}} \left( \frac{\omega }{\Omega} \right)^{\frac12-\frac1{2d}}$ for $N> (2A_d)^{1/\alpha_d}$. Since this constant is greater then one, the right-hand side is greater than $\frac{1}{N}$ for all $N$. Thus according to \eqref{e.approx}, the discrepancy estimate \eqref{e.wupper} holds for all sets $W_{xy}$, $x$, $y\in \mathbb S^d$ with twice the constant.

Recalling that $\alpha_d = 2(d+1)$ and $A_d = (Cd)^{d+1}$ and the fact  that $\frac{\omega}{\Omega} \le \sqrt{\frac{d}{2\pi}}$, we find that the constant is at most $C_d = 20 d^{\frac34 +\frac1{4d}}$ whenever $N \ge 100 d$. This finishes the proof of Theorem \ref{t.wupper}. \,\,\, $\square$ \\

%\vskip2cm

%we may take i.e. $C_d =  16 \sqrt{\alpha_d}\, d^{\frac{1}{2d}} \left( \frac{\omega }{\Omega} \right)^{\frac12-\frac1{2d}}$ whenever $N> (2A_d)^{1/\alpha_d}$. 

%\textcolor{red}{{\emph{Remark:}} There are a couple more things that we may to want to check here.}

%-- I'll run over the argument to see if it gives the same thing for  the spherical cap discrepancy. The constants should be more or less the same. The behavior of these constants has never been explored, so this may be interesting to include it in the paper.

%--   In the argument,we chose the $\varepsilon$-approximating family with $\varepsilon = \frac1{N}$. We can actually try to optimize in $\varepsilon$, so that it is of the same order as the right-hand side of \eqref{e.upperapprox} and see if it gives a better result. At this point I just wanted to write down some argument that works and gives decent constants.  I will run this calculation.\\

%\subsection{Bound for uniform tessellation}

\noindent {\emph{Proof of Theorem \ref{t.natess}.}
It is  a straightforward, but tedious task to check that if $N\ge 100d$ and  $$ N> 400 d^\gamma \, \delta^{-\frac{2d}{d+1}}\,  \left( (d+1) \log (400 d^\gamma) + 2d \log \frac1{\delta} \right)^\frac{d}{d+1}, $$ where $\gamma = \frac32 - \frac1{d+1}$, one has $ 20 d^{\frac34 +\frac1{4d}} N^{-\frac12 - \frac{1}{2d}} \sqrt{\log N}
< \delta. $
Therefore,  with positive probability jittered sampling with $N$ points yields a $\delta$-uniform tessellation. It is easy to see that the right-hand side in the equation above  is bounded by $4000 d^\alpha \,\, \delta^{-2 + \frac2{d+1}}  \, \left(  1 + \log d + \log \frac1{\delta}  \right)^{\frac{d}{d+1}}  ,$
where $\alpha = \frac52 - \frac{2}{d+1}$, which proves Theorem \ref{t.natess}.

\section{ Stolarsky  principle for the wedge discrepancy.}\label{s.stol}

We now turn to the proof of Stolarsky principle for tessellations, Theorem \ref{t.stol}. Recall that  the $L^2$ norm of the function $\Delta_Z (x,y)$ for a  set $Z\subset \mathbb S^{d}$ is 
\begin{equation}\label{e.L2again}
\big\| \Delta_Z (x,y) \big\|_2^2\,\, = \,\, \int\limits_{\mathbb S^{d}}  \int\limits_{\mathbb S^{d}}  \left( \frac{1}{N} \sum_{k=1}^N {\bf 1}_{W_{xy} } (z_k)  - \sigma (W_{xy}) \right)^2 d\sigma (x) \, d\sigma(y).
\end{equation}
The proof is quite elementary in nature and conforms to a standard algorithm of many similar problems: we square out the expression above, and the cross terms yield the discrete potential energy of the interactions of points of $Z$. The idea is generally quite fruitful. Torquato \cite{To} applies this approach (both theoretically and numerically) to many questions of discrete geometric optimization, such as packings, coverings, number variance,  to recast them as energy-minimization problems. \\
% and %will try to see whether we can 
%obtain an  analogue of the Stolarsky principle (which relates the $L^2$ spherical cap discrepancy to the  sum of pairwise distances  of points in $Z$).  We have the following theorem.

\noindent {\emph{Proof of Theorem \ref{t.stol}.}} We recall that $\sigma (W_{xy}) =   d(x,y)$ and notice that we can write (up to sets of measure zero)
  $$\displaystyle{ {\bf 1}_{W_{xy} } (z_k) = {\bf 1}_{\{ \textup{sgn} (x\cdot z_k) \neq \textup{sgn} (y\cdot z_k) \}} (z_k) = \frac12 \Big(1-  \textup{sgn} (x\cdot z_k)  \cdot \textup{sgn} (y\cdot z_k)  \Big)},$$ 
  therefore, using \eqref{e.L2again}, we have 
\begin{align}
\nonumber  \int\limits_{\mathbb S^d} \!&\int\limits_{\mathbb S^{d}}   \Delta_Z (x,y)^2 d\sigma(x)\, d\sigma (y) \, =   \\
\label{term1} & =  \frac{1}{4N^2} \int\limits_{\mathbb S^d}\! \int\limits_{\mathbb S^d}  \sum_{i,j=1}^N \Big(1- \textup{sgn} (x\cdot z_i) \textup{sgn} (y\cdot z_i)  \Big) \Big(1- \textup{sgn} (x\cdot z_j)   \textup{sgn} (y\cdot z_j)  \Big)\, d\sigma (x) \, d\sigma (y)\\ 
\label{term2} & \qquad  - \frac{2}{N}  \sum_{k =1}^N \int\limits_{\mathbb S^d} \!\int\limits_{\mathbb S^d}   {\bf 1}_{W_{xy} } (z_k)  \cdot d(x,y)\,\, d\sigma (x) \, d\sigma (y)\\
\label{term3} &\qquad  +  \int\limits_{\mathbb S^{d}} \!\int\limits_{\mathbb S^{d}}  d(x,y)^2 \, d\sigma (x) \, d\sigma (y). 
\end{align}\vskip2mm
\noindent The most interesting term is the first one \eqref{term1}. Using the obvious fact that the integral  $ \int\limits_{\mathbb S^{d}} \textup{sgn} (p\cdot x) \, d\sigma (x) = 0$ for any $p \in \mathbb S^d$, we reduce this term to
\begin{align}
  \frac14 + & \frac{1}{4 N^2} \sum_{i,j=1}^N \bigg( \int\limits_{\mathbb S^d}  \textup{sgn} (x\cdot z_i)  \cdot \textup{sgn} (x\cdot z_j) \, d\sigma (x) \bigg)^2   
\label{term1a} = \frac14 + \frac{1}{ N^2 } \sum_{i,j=1}^N \bigg( \frac{1}{2} - d(z_i,z_j) \bigg)^2,
\end{align} 
where the last line is obtained as follows. Rewrite the integrand as $ \textup{sgn} (x\cdot z_i)    \textup{sgn} (x\cdot z_j) =  1 -  2 \cdot {\bf 1}_{W_{z_i z_j}} (x),$ therefore $$ \int\limits_{\mathbb S^d}  \textup{sgn} (x\cdot z_i)    \textup{sgn} (x\cdot z_j) \, d\sigma (x) = 1 - 2 \int\limits_{\mathbb S^d}   {\bf 1}_{W_{z_i z_j}} (x)\, d\sigma (x)  =  1 - {2 d(z_i,z_j)}.$$ \vskip2mm

%I haven't quite finished computing  the other two terms yet, but it is easy to see that they are independent of $N$. In particular, the second term \eqref{term2} reduces to 
\noindent We shall see that in the second term  \eqref{term2} one can easily replace the discrete average over $z_k \in Z$ by the continuous average over $p\in \mathbb S^d$, which is simpler to handle. Indeed, notice that by rotational invariance the integrand in \eqref{term2} does not depend on the particular choice of $z_k \in \mathbb S^d$. Therefore, for an arbitrary pole $p\in \mathbb S^d $ we can write
\begin{equation}\nonumber
\frac{2}{ N}  \sum_{k =1}^N \int\limits_{\mathbb S^d} \int\limits_{\mathbb S^d}   {\bf 1}_{W_{xy} } (z_k)  \cdot d(x,y)\,\, d\sigma (x) \, d\sigma (y) = 2   \int\limits_{\mathbb S^d} \int\limits_{\mathbb S^d}   {\bf 1}_{W_{xy} } (p)  \cdot d(x,y)\,\, d\sigma (x) \, d\sigma (y)
\end{equation}
Invoking rotational symmetry again, we see that the integral above may be replaced by the average over $p\in \mathbb S^d$:
\begin{align}\nonumber
& 2  \int\limits_{\mathbb S^d} \int\limits_{\mathbb S^d}    {\bf 1}_{W_{xy} } (p)  \cdot d(x,y)\,\, d\sigma (x) \, d\sigma (y)  =  2  \int\limits_{\mathbb S^d} \int\limits_{\mathbb S^d} \int\limits_{\mathbb S^d}   {\bf 1}_{W_{xy} } (p)  \cdot d(x,y)\,\, d\sigma (x) \, d\sigma (y) d\sigma(p)\\
\nonumber & =  2  \int\limits_{\mathbb S^d} \int\limits_{\mathbb S^d}  \Bigg[\,\, \int\limits_{\mathbb S^d}   {\bf 1}_{W_{xy} } (p) d\sigma(p) \Bigg]   \cdot d(x,y)\,\, d\sigma (x) \, d\sigma (y)  \,\, = \,\, 2  \int\limits_{\mathbb S^d} \int\limits_{\mathbb S^d}  d(x,y)^2 \, d\sigma (x) \, d\sigma (y),
\end{align}
thus the term has the same form as the last one \eqref{term3}. Putting this together we find that
\begin{equation}
\big\| \Delta_Z (x,y) \big\|^2_2
\, = \, \frac{1}{ N^2}  \sum_{i,j=1}^N \Big(\frac{1}2 -  d (z_i, z_j) \Big)^2 + \frac{1}{4}  -  \int\limits_{\mathbb S^d} \int\limits_{\mathbb S^d}  d(x,y)^2 \, d\sigma (x) \, d\sigma (y).
\end{equation}
Observing that  $\int\limits_{\mathbb S^d} d(x,y) d\sigma(x) = \frac12$  and hence
$$ \int\limits_{\mathbb S^d}  \int\limits_{\mathbb S^d}   \Big(\frac{1}2 -  d (x,y) \Big)^2  d\sigma (x) \, d\sigma(y) =   \int\limits_{\mathbb S^d} \int\limits_{\mathbb S^d}  d(x,y)^2 \, d\sigma (x) \, d\sigma (y)  - \frac14,$$ we arrive to the desired conclusion \eqref{e.stol}:
\begin{equation}\label{e.stol1}
\big\| \Delta_Z (x,y) \big\|^2_2
\, = \, \frac{1}{ N^2}  \sum_{i,j=1}^N \Big(\frac{1}2 -  d (z_i, z_j) \Big)^2    -  \int\limits_{\mathbb S^d} \int\limits_{\mathbb S^d}  \Big( \frac12 - d(x,y) \Big)^2 \, d\sigma (x) \, d\sigma (y).\,\,\,\,\, \square
\end{equation}

%\begin{equation}
%- \frac{4}{\pi}  \int\limits_{S_+^{d-1}} \int\limits_{S_-^{d-1}} d(x,y) \, d\sigma (x) \, d\sigma (y),
%\end{equation}
%where the hemispheres $S^{d-1}_\pm$ are defined as $S_\pm^{d-1} = \{x\in S^{d-1}:\, \textup{sign} (x\cdot p ) = \pm 1\}$ for some fixed pole $p \in S^{d-1}$.  This may be tricky to compute exactly, but should be doable.

%This theorem says that the $L^2$ discrepancy associated to the tessellation is equal to the difference between the discrete and continuous averages of the quantity $\big( 1/2 - d(x,y) \big)^2$.\\

%This seems to imply that a good tessellation (in the $L^2$ sense) could be produced by a set where all the vectors are close to being pairwise orthogonal, i.e. by a good {\bf{spherical code}}. In addition, Alex Iosevich mentioned in his talk an example of a spherical point set (due to M. Rudnev) in which many pairs are orthogonal. This is all very raw and needs further investigation.

%The problem described here arises quite naturally in {\emph{compressed sensing}}, in particular, in so-called {\emph{one-bit sensing}}. It was recently studied by Vershynin and Plan \cite{MR3164174} in this context, but they looked only at the uniform ($L^\infty$) estimates, not $L^2$, so this part is new.

%It seems likely that this $L^2$ discrepancy should satisfy bounds similar to the case of spherical caps, i.e. $N^{-\frac12-\frac1{2{(d-1)}}}$ \cite{MR762175},\cite{St}, and these considerations may indicate how to search for sets with small discrepancy.

\subsection{$L^2$ discrepancy for random tessellations} Stolarsky principle provides a very simple way to compute    the expected value of the square of the $L^2$ discrepancy. Assume that the set $Z = \{ z_1,\dotsc, z_N \} \subset \mathbb S^d$ is random and compute the expectation of $\big\| \Delta_Z (x,y) \big\|^2_2$. Obviously, for a typical point set $Z$ and a typical wedge $W_{xy}$ the discrepancy is of the order  $1/\sqrt{N}$, therefore this  expected value naturally behaves as $\mathcal O (1/N)$.  To   compute its value precisely we shall need the quantity that already arose in the computations above, namely the second moment of the geodesic distance, i.e. the expected value of the square of the geodesic distance between two random points on the sphere:
\begin{equation}
V_d = \mathbb E_{xy}\,  d(x,y)^2  = \int\limits_{\mathbb S^d} \int\limits_{\mathbb S^d}  d(x,y)^2 \, d\sigma (x) \, d\sigma (y).%= \frac{\omega_{d-2}}{\pi^2 \omega_{d-1}} \int\limits_{0}^\pi \phi^2  (\sin \phi)^{d-2} \, d\phi,
\end{equation} 
\begin{lemma}\label{t.L2random}
Let $Z = \{ z_1, ..., z_N\} \subset \mathbb S^d $ consist of $N$ i.i.d. uniformly distributed points on the sphere. Then
\begin{equation}
\mathbb E_Z \big\| \Delta_Z (x,y) \big\|^2_2 =  \frac{1}{N}  \cdot \bigg( \frac12 - {V_d} \bigg).
\end{equation}
\end{lemma}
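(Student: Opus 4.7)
The plan is to start from the Stolarsky principle for wedges (Theorem~\ref{t.stol}) and simply take expectations over $Z$. Writing $P(u,v) = \bigl(\tfrac{1}{2} - d(u,v)\bigr)^2$ and $I = \int_{\mathbb S^d}\int_{\mathbb S^d} P(x,y)\,d\sigma(x)\,d\sigma(y)$, Theorem~\ref{t.stol} gives
\begin{equation*}
\big\|\Delta_Z(x,y)\big\|_2^2 \;=\; \frac{1}{N^2}\sum_{i,j=1}^N P(z_i,z_j) \;-\; I.
\end{equation*}
Taking $\mathbb E_Z$ reduces the problem to computing the expectation of the double sum, which splits naturally into diagonal and off-diagonal parts.

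For the diagonal terms $i=j$, we use $d(z_i,z_i)=0$, so each contributes $(\tfrac12)^2 = \tfrac14$; there are $N$ such terms. For the off-diagonal terms, independence of $z_i$ and $z_j$ together with the uniform distribution on $\mathbb S^d$ means $\mathbb E\, P(z_i,z_j) = I$, and there are $N^2 - N$ such pairs. Putting these together,
\begin{equation*}
\mathbb E_Z\big\|\Delta_Z\big\|_2^2 \;=\; \frac{1}{N^2}\Bigl[\tfrac{N}{4} + (N^2-N)\,I\Bigr] - I \;=\; \frac{1}{4N} - \frac{I}{N}.
\end{equation*}

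The final step is to identify $I$ in terms of $V_d$. Expanding the square and using the identity $\int_{\mathbb S^d} d(x,y)\,d\sigma(x) = \tfrac12$ (which is immediate from Crofton and the fact that the probability of a random hyperplane separating a fixed point from a uniform point is $1/2$, or equivalently by symmetry), we get $I = V_d - \tfrac12 + \tfrac14 = V_d - \tfrac14$. Substituting back,
\begin{equation*}
\mathbb E_Z\big\|\Delta_Z\big\|_2^2 = \frac{1}{4N} - \frac{V_d - 1/4}{N} = \frac{1}{N}\Bigl(\tfrac12 - V_d\Bigr),
\end{equation*}
as required. There is no real obstacle here: the Stolarsky identity already packages the hard geometry, and the only nontrivial input besides it is the elementary average of $d(x,y)$ over the sphere, so the proof is essentially bookkeeping of diagonal versus off-diagonal contributions.
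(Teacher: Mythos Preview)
Your proof is correct and follows essentially the same approach as the paper: both start from the Stolarsky principle for wedges, take expectations, split the discrete sum into diagonal and off-diagonal terms, and use the identity $\int_{\mathbb S^d}\int_{\mathbb S^d}\bigl(\tfrac12-d(x,y)\bigr)^2\,d\sigma\,d\sigma = V_d - \tfrac14$. The only cosmetic difference is that you introduce the shorthand $P$ and $I$, while the paper writes out the expressions directly.
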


\begin{proof}
%We use the computation \eqref{term1}-\eqref{term3} from the proof of the Stolarsky principle. 
% (as noted above $V_3 = \pi$). 
%where $\omega_{d-1}$ is the surface area of  $S^{d-1}$. 
It is obvious that ${\mathbb E_{xy} d(x,y) = \frac{1}2}$ and hence $\mathbb E_{xy} \Big( \frac12 - d(x,y) \Big)^2  = V_d - \frac14$. We use the final form of the Stolarsky principle \eqref{e.stol1} to find the value   of  $\mathbb E_Z \big\| \Delta_Z (x,y) \big\|^2_2$. We  separate  the off-diagonal and diagonal terms in the discrete part of \eqref{e.stol1} to obtain
\begin{align*}
\mathbb E_Z \big\| \Delta_Z (x,y) \big\|^2_2 & =  \frac1{N^2} \sum_{i,j=1}^N \mathbb E_{z_i,z_j} \Big(\frac12 - d(z_i,z_j) \Big)^2  - \Big(V_d - \frac14 \Big)\\
& = \frac{1 } {N^2} \cdot (N^2 -N)  \cdot  \Big(V_d - \frac14\Big) +  \frac{1}{N^2} \cdot N \cdot \frac14 - \Big(V_d - \frac14\Big)\\
& = \frac{1}{N}  \cdot \Big( \frac12 - {V_d} \Big),
\end{align*}
which finishes the proof.
%\begin{align}
%\notag  & \frac14 + \frac{1}{ N^2} \cdot (N^2 - N) \cdot \Big( \frac{1}{4} -  \mathbb E_{xy} d(x,y) + \mathbb E_{xy} \big( d(x,y) \big)^2\Big) +  \frac{1}{ N^2} \cdot N \cdot \frac{1}{4}  = {V_d}{} - \frac{V_d}{ N} + \frac{1}{2N}.
%\end{align}
%The second term \eqref{term2} yields
%\begin{align*}
%- \frac{2}{ N}  \int\limits_{S^{d-1}} \int\limits_{S^{d-1}}  \sum_{i=1}^N \mathbb E_{z_i} {\bf 1}_{W_{xy}  } (z_i)  & \cdot d(x,y)\,\, d\sigma (x) \, d\sigma (y)  =  - 2 \int\limits_{S^{d-1}} \int\limits_{S^{d-1}}  d(x,y)^2 \, d\sigma (x) = - {2V_d},
%\end{align*}
%while the third term \eqref{term3} obviously just gives $\displaystyle{{V_d}{}}$. 
\end{proof}

In the case of spherical cap discrepancy this computation is even simpler:  %similar to Lemma \ref{t.L2random} is even simpler. It yields:
\begin{lemma}
Let $Z = \{ z_1, ..., z_N\} \subset \mathbb S^d $ consist of $N$ i.i.d. uniformly distributed points on the sphere. Then
\begin{equation}
\mathbb E_Z D_{\textup{cap},L^2}^2 =  \frac{c_d U_d}{N} ,
\end{equation}
where $U_d = \mathbb E_{x,y \in \mathbb S^d}  \| x- y \| $ and $c_d$ is the constant from Theorem \ref{t.originalstol}.
\end{lemma}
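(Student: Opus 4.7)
The plan is to apply the original Stolarsky invariance principle (Theorem \ref{t.originalstol}) directly and then take expectation over $Z$, in complete parallel with the proof of Lemma \ref{t.L2random}. By that principle,
\begin{equation*}
\frac{1}{c_d}\,\big[D_{\textup{cap},L^2}(Z)\big]^2 \,=\, U_d \,-\, \frac{1}{N^2}\sum_{i,j=1}^N \|z_i - z_j\|,
\end{equation*}
since by definition $U_d = \int_{\mathbb S^d}\!\int_{\mathbb S^d} \|x-y\|\,d\sigma(x)\,d\sigma(y)$. Taking $\mathbb E_Z$ of both sides and using linearity of expectation reduces the problem to evaluating $\mathbb E_Z \sum_{i,j=1}^N \|z_i - z_j\|$.

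Next, I would split this sum into diagonal and off-diagonal pieces. The diagonal terms $\|z_i - z_i\|$ vanish identically. For $i \ne j$, independence of $z_i$ and $z_j$ together with their uniform distribution on $\mathbb S^d$ yields $\mathbb E\,\|z_i - z_j\| = U_d$, and there are exactly $N^2 - N$ such ordered pairs, so
\begin{equation*}
\mathbb E_Z \sum_{i,j=1}^N \|z_i - z_j\| \,=\, (N^2 - N)\,U_d.
\end{equation*}
Substituting this back gives
\begin{equation*}
\frac{1}{c_d}\,\mathbb E_Z \big[D_{\textup{cap},L^2}\big]^2 \,=\, U_d \,-\, \Big(1 - \tfrac{1}{N}\Big)\,U_d \,=\, \frac{U_d}{N},
\end{equation*}
which is the claimed identity after multiplying through by $c_d$.

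There is essentially no substantive obstacle here: the heavy lifting has already been done by Stolarsky's principle, which converts the $L^2$ cap discrepancy into a sum of pairwise Euclidean distances whose expectations are trivially computed under the i.i.d.\ uniform model. The only small bookkeeping comment worth making is that this identity is cleaner than its wedge counterpart in Lemma \ref{t.L2random} precisely because the diagonal contribution $\|z_i - z_i\|$ is zero, whereas the analogous diagonal term $\big(\tfrac12 - d(z_i,z_i)\big)^2 = \tfrac14$ in the wedge case contributes an additional $\tfrac{1}{4N}$ and produces the extra $\tfrac12 - V_d$ shape of that formula.
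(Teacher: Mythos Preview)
Your proof is correct and follows exactly the same route as the paper: apply Stolarsky's invariance principle, take expectations, and separate diagonal from off-diagonal terms. Your write-up is in fact slightly cleaner than the paper's one-line computation, which displays the two summands in the opposite order.
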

Indeed, using the original Stolarsky principle, Theorem \ref{t.originalstol}, one gets
$$  \frac{1}{c_d} \mathbb E_Z D_{\textup{cap},L^2}^2 =  \frac{1}{N^2} \sum_{i,j=1}^N \mathbb E_{z_i,z_j} \| z_i - z_j \|  -  U_d  = \frac{N^2 - N}{N^2} U_d - U_d = \frac{U_d}{N}.$$

%\subsection{The second moment of the geodesic distance on the sphere}

%Finally, the last term \eqref{term3} is easily rewritten as 
Finally, we  take a closer look at the expected  value of the square of the geodesic distance $V_d$. We remark that it can be written as
\begin{equation}
V_d =  \int\limits_{\mathbb S ^{d}} \int\limits_{\mathbb S ^{d}}  d(x,y)^2 \, d\sigma (x) \, d\sigma (y) = \frac{1}{\pi^2} \cdot \frac{\omega}{\Omega} \int\limits_{0}^\pi \phi^2  (\sin \phi)^{d-1} \, d\phi,
\end{equation}
where $\omega$ is the surface area of  $\mathbb S ^{d-1}$. %For any given value of $d\ge 3$, the integrals above may be evaluated directly, although no simple closed form expression seems to be available. 
In 
Table \ref{table:t1} we list the values of $V_d$ in low dimensions.

\begin{table}[t]
\centering
\caption{The values of $V_d$ \label{table:t1}} \vskip2mm
\begin{tabular}{|c|c|c|c|c|c|}
\hline

\raisebox{-1ex}{$d$} &  \raisebox{-1ex}{$\,\, d=2 \,\,$}  &  \raisebox{-1ex}{$\,\, d=3 \,\,$} &  \raisebox{-1ex}{$\,\, d=4 \,\,$} &  \raisebox{-1ex}{$\,\, d=5 \,\,$} &  \raisebox{-1ex}{$\,\, d=6 \,\,$}    \\ [2ex]
\hline

\raisebox{-2ex}{$\,\, V_d \,\,$}   &  \raisebox{-2ex}{$\,\,\, \displaystyle{\frac12 - \frac{2}{   \pi^2} }\,\,\,$}    &  \raisebox{-2ex}{$\,\,\, \displaystyle{\frac13 - \frac{1}{ 2  \pi^2} }\,\,\,$}  &   \raisebox{-2ex}{$\,\,\,\displaystyle{\frac12 - \frac{20}{9   \pi^2} }\,\,\,$}    &  \raisebox{-2ex}{$\,\,\,\displaystyle{\frac13 - \frac{5}{ 8  \pi^2} }\,\,\,$}  &   \raisebox{-2ex}{$\,\,\,\displaystyle{\frac12 - \frac{518}{225   \pi^2} }\,\,\,$}  \\ [4ex]
 \hline
\end{tabular} \end{table}

%With a little bit of work, one should be able to  compute this integral (I haven't had the time yet :) ). In particular, if $d=3$, then $$  \int\limits_{S^{2}} \int\limits_{S^{2}}  d(x,y)^2 \, d\sigma (x) \, d\sigma (y) =  \frac{2\pi}{4\pi } \int\limits_{0}^\pi \phi^2  \sin \phi  \, d\phi = \pi. $$\\

%\vskip3mm It turns out % appears (I haven't quite  finished all the calculations yet) 
%that if one computes the $L^2$ average of the discrepancy-type function above, it indeed contains a  ``interaction" term  of the  following form:
%In any case, we thus find that 
%\begin{equation}
%\int\limits_{S^{d-1}} \int\limits_{S^{d-1}} \big( 
%\big\| \Delta_Z (x,y) \big\|^2_2%{L^2( d\sigma(x)\, d\sigma (y))} 
%\, = \, \frac{1}{\pi^2 N^2}  \sum_{i,j=1}^N \bigg(\frac{\pi}2 -  d (z_i, z_j) \bigg)^2  -  \widetilde{C_d},
%\end{equation}
%where $\widetilde{C_d}$ is a constant that depends only on the dimension $d$. 

\subsection{$L^2$ wedge  discrepancy for jittered sampling.} Stolarsky principle \eqref{e.stol1} allows one to easily prove  that jittered sampling yields optimal order of the $L^2$ wedge discrepancy.

\begin{lemma}\label{l.l2jittered}
 Let $Z=\{ z_1, ..., z_N\} \subset \mathbb S^d$, $N\in \mathbb N$, be a point set constructed by jittered sampling corresponding to a regular partition of the sphere with constant $K_d$. Then 
 \begin{equation}
 \mathbb E_Z  \big\| \Delta_Z (x,y) \big\|^2_2 \le  K_d N^{-1- \frac1{d}}.
 \end{equation}
\end{lemma}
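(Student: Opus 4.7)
The plan is to combine the Stolarsky principle for wedges, Theorem~\ref{t.stol}, with the fact that in jittered sampling the $z_i$'s are \emph{independent} and each $z_i$ is uniform in the cell $S_i$ (with normalized density $N\cdot d\sigma|_{S_i}$). Starting from
\begin{equation*}
\bigl\|\Delta_Z(x,y)\bigr\|_2^2
= \frac{1}{N^2}\sum_{i,j=1}^N \Bigl(\tfrac12 - d(z_i,z_j)\Bigr)^{\!2}
- \iint_{\mathbb S^d\times \mathbb S^d}\Bigl(\tfrac12 - d(x,y)\Bigr)^{\!2}d\sigma(x)\,d\sigma(y),
\end{equation*}
I would take $\mathbb E_Z$ and split the double sum into diagonal ($i=j$) and off-diagonal ($i\neq j$) contributions. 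The diagonal part contributes exactly $\tfrac{1}{4N}$, while independence turns each off-diagonal expectation into $N^2\int_{S_i\times S_j}(\tfrac12 - d(u,v))^2\,d\sigma(u)\,d\sigma(v)$.

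Next I would observe that the continuous integral decomposes as
\begin{equation*}
\iint_{\mathbb S^d\times\mathbb S^d}\!\Bigl(\tfrac12-d(x,y)\Bigr)^{\!2}d\sigma\,d\sigma \;=\;\sum_{i,j}\int_{S_i\times S_j}\!\Bigl(\tfrac12-d(x,y)\Bigr)^{\!2}d\sigma\,d\sigma,
\end{equation*}
so the off-diagonal terms cancel exactly against the $i\neq j$ portion of this sum, leaving only diagonal blocks $S_i\times S_i$ from the continuous side. Using $\sum_i \sigma(S_i)^2 = 1/N$, one computes
\begin{equation*}
\mathbb E_Z\bigl\|\Delta_Z\bigr\|_2^2
= \frac{1}{4N} - \sum_{i=1}^N \int_{S_i\times S_i}\!\Bigl(\tfrac12 - d(x,y)\Bigr)^{\!2}d\sigma(x)\,d\sigma(y)
= \sum_{i=1}^N\int_{S_i\times S_i}\bigl[d(x,y)-d(x,y)^2\bigr]d\sigma\,d\sigma,
\end{equation*}
where in the last equality I expand the square and use $\sum_i\sigma(S_i)^2\cdot\tfrac14=\tfrac{1}{4N}$.

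The main (and essentially only) remaining step is a diameter estimate. For $x,y\in S_i$, the Euclidean diameter bound $\|x-y\|\le K_d N^{-1/d}$ together with Jordan's inequality $2\sin(\theta/2)\ge \tfrac{2}{\pi}\theta$ for $\theta\in[0,\pi]$ yields $d(x,y)\le \tfrac{1}{2}\|x-y\|\le \tfrac{1}{2}K_d N^{-1/d}$. Using the crude bound $d - d^2\le d$ (valid since $d\in[0,1]$), I conclude
\begin{equation*}
\mathbb E_Z\bigl\|\Delta_Z\bigr\|_2^2
\le \frac{K_d}{2N^{1/d}}\sum_{i=1}^N \sigma(S_i)^2
= \frac{K_d}{2}\,N^{-1-\frac{1}{d}}
\le K_d\, N^{-1-\frac{1}{d}},
\end{equation*}
which is the claimed bound. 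The only real subtlety to flag is the cancellation in the second step: it is critical that the only surviving contribution involves integration over the small cells $S_i\times S_i$, where the diameter bound of the regular partition can be exploited; this is precisely what makes jittered sampling outperform i.i.d.\ random sampling (compare Lemma~\ref{t.L2random}, which yields only $1/N$).
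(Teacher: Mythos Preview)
Your proof is correct and follows essentially the same route as the paper: apply the Stolarsky principle, take expectations, use independence to cancel the off-diagonal terms against the continuous integral, and bound the surviving diagonal blocks $S_i\times S_i$ via the diameter of the regular partition. The paper's writeup is terser (it skips straight to $\sum_i\int_{S_i\times S_i}(d-d^2)\,d\sigma\,d\sigma$), and it uses the cruder bound $d(x,y)\le\|x-y\|$ rather than your $d(x,y)\le\tfrac12\|x-y\|$ from Jordan's inequality, so you actually obtain the sharper constant $K_d/2$ before discarding the factor of $2$ at the end.
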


A matching lower bound for arbitrary $N$-point sets is known for  caps and slices and can be easily generalized to wedges, see \eqref{e.sliceL2} and the discussion immediately thereafter.

\begin{proof} We notice that for $i\neq j$ we have $\mathbb E \Big(\frac{1}2 -  d (z_i, z_j) \Big)^2 = N^2 \int\limits_{S_i} \int\limits_{S_j} \Big( \frac12 - d(x,y) \Big)^2 \, d\sigma (x) \, d\sigma (y)$, while for  $i=j$ one simply gets $\frac14$. Therefore,

\begin{align*}
\mathbb E_Z  \big\| \Delta_Z (x,y) \big\|^2_2
\, &  = \, \frac{1}{ N^2}  \sum_{i,j=1}^N \mathbb E_{z_i,z_j} \Big(\frac{1}2 -  d (z_i, z_j) \Big)^2    -  \int\limits_{\mathbb S^d} \int\limits_{\mathbb S^d}  \Big( \frac12 - d(x,y) \Big)^2 \, d\sigma (x) \, d\sigma (y)\\
& =  \sum_{i=1}^N  \int\limits_{S_i} \int\limits_{S_i} \Big( d(x,y) - d^2(x,y) \Big) \, d\sigma (x) \, d\sigma (y) \le \sum_{i=1}^N K_d N^{-\frac1{d}} \cdot \frac1{N^2} = K_d N^{-1-\frac1{d}},
\end{align*}
since $d(x,y) - d^2(x,y) \le d(x,y) \le \| x - y \| \le K_d N^{-1/d}$ for $x$, $y \in S_i$. 
\end{proof}

\begin{bibdiv}

\begin{biblist}

\bib{MR3000572}{article}{
  author={Aistleitner, C.},
  author={Brauchart, J. S.},
  author={Dick, J.},
  title={Point sets on the sphere $\Bbb {S}^2$ with small spherical cap discrepancy},
  journal={Discrete Comput. Geom.},
  volume={48},
  date={2012},
  number={4},
  pages={990--1024},
  issn={0179-5376},
}

\bib{MR3087345}{article}{
  author={Ball, K.},
  title={The Ribe programme},
  note={S\'eminaire Bourbaki. Vol. 2011/2012. Expos\'es 1043--1058},
  journal={Ast\'erisque},
  number={352},
  date={2013},
  pages={Exp. No. 1047, viii, 147--159},
  issn={0303-1179},
  isbn={978-2-85629-371-3},
}

\bib{MR2453366}{article}{
  author={Baraniuk, R.},
  author={Davenport, M.},
  author={DeVore, R.},
  author={Wakin, M.},
  title={A simple proof of the restricted isometry property for random matrices},
  journal={Constr. Approx.},
  volume={28},
  date={2008},
  number={3},
  pages={253--263},
}

\bib{14078246}{article}{
  author={{Baraniuk}, R.},
  author={Foucart, S.},
  author={Needell, D.},
  author={Plan, Y.},
  author={Wootters, M.},
  title={Exponential decay of reconstruction error from binary measurements of sparse signals},
  eprint={https://arxiv.org/abs/1407.8246},
  year={2014},
}

\bib{MR736726}{article}{
  author={Beck, J.},
  title={Some upper bounds in the theory of irregularities of distribution},
  journal={Acta Arith.},
  volume={43},
  date={1984},
  number={2},
  pages={115--130},
  issn={0065-1036},
}

\bib{MR762175}{article}{
  author={Beck, J.},
  title={Sums of distances between points on a sphere---an application of the theory of irregularities of distribution to discrete geometry},
  journal={Mathematika},
  volume={31},
  date={1984},
  number={1},
  pages={33--41},
  issn={0025-5793},
}

\bib{MR903025}{book}{
  author={Beck, J.},
  author={Chen, W. W. L.},
  title={Irregularities of distribution},
  series={Cambridge Tracts in Mathematics},
  volume={89},
  publisher={Cambridge University Press, Cambridge},
  date={1987},
  pages={xiv+294},
  isbn={0-521-30792-9},
}

\bib{MR1968126}{article}{
  author={Benedetto,  J.},
  author={Fickus, M.},
  title={Finite normalized tight frames},
  journal={Adv. Comput. Math.},
  volume={18},
  date={2003},
  number={2-4},
  pages={357--385},
  issn={1019-7168},
}

\bib{B}{article}{
  author={Bilyk, D.},
  author={Dai, F.},
  author={Matzke, R.},
  title={Stolarsky principle and energy optimization on the sphere},
  date={2016},
  eprint={https://arxiv.org/abs/1611.04420},
}

\bib{BL}{article}{
  author={Bilyk, D.},
  author={Lacey, M.  T.},
  title={Random Tessellations, Restricted Isometric Embeddings and One Bit Sensing},
  date={2015},
  eprint={https://arxiv.org/abs/1512.06697}
}

\bib{MR1116689}{article}{
  author={Bl{\"u}mlinger, M.},
  title={Slice discrepancy and irregularities of distribution on spheres},
  journal={Mathematika},
  volume={38},
  date={1991},
  number={1},
  pages={105--116},
  issn={0025-5793},
}

\bib{1Bit}{article}{
  author={Boufounos, P.},
  author={Baraniuk, R.},
  title={1-bit compressive sensing},
  journal={Proc. 42nd Annu. Conf. Inf. Sci. Syst., Princeton, NJ, },
  date={Mar. 2008},
  page={16Ð21},
}

\bib{MR981745}{article}{
  author={Bourgain, J.},
  author={Lindenstrauss, J.},
  title={Distribution of points on spheres and approximation by zonotopes},
  journal={Israel J. Math.},
  volume={64},
  date={1988},
  number={1},
  pages={25--31},
  issn={0021-2172},
}

\bib{MR3034434}{article}{
  author={Brauchart, J. S.},
  author={Dick, J.},
  title={A simple proof of Stolarsky's invariance principle},
  journal={Proc. Amer. Math. Soc.},
  volume={141},
  date={2013},
  number={6},
  pages={2085--2096},
  issn={0002-9939},
}

\bib{MR3365840}{article}{
  author={Brauchart, J. S.},
  author={Dick, J.},
  author={Saff, E. B.},
  author={Sloan, I. H.},
  author={Wang, Y. G.},
  author={Womersley, R. S.},
  title={Covering of spheres by spherical caps and worst-case error for equal weight cubature in Sobolev spaces},
  journal={J. Math. Anal. Appl.},
  volume={431},
  date={2015},
  number={2},
  pages={782--811},
  issn={0022-247X},
}

\bib{MR2300700}{article}{
  author={Candes, E. J.},
  author={Tao, T.},
  title={Near-optimal signal recovery from random projections: universal encoding strategies?},
  journal={IEEE Trans. Inform. Theory},
  volume={52},
  date={2006},
  number={12},
  pages={5406--5425},
  issn={0018-9448},
}

\bib{CasTrem}{article}{
  author={Casazza, P.G. },
  author={Redmond, D.},
  author={Tremain, J.C.},
  title={Real equiangular frames},
  journal={Proceedings of CISS, Princeton, N.J.},
  date={2008},
}

\bib{MR1779341}{book}{
  author={Chazelle, B.},
  title={The discrepancy method},
  %note={Randomness and complexity},
  publisher={Cambridge University Press, Cambridge},
  date={2000},
  pages={xviii+463},
  isbn={0-521-77093-9},
}

\bib{MR0139079}{article}{
  author={Dvoretzky, A.},
  title={Some results on convex bodies and Banach spaces},
  conference={ title={Proc. Internat. Sympos. Linear Spaces}, address={Jerusalem}, date={1960}, },
  book={ publisher={Jerusalem Academic Press, Jerusalem; Pergamon, Oxford}, },
  date={1961},
  pages={123--160},
}

\bib{MR1900615}{article}{
  author={Feige, U.},
  author={Schechtman, G.},
  title={On the optimality of the random hyperplane rounding technique for MAX CUT},
  %note={Probabilistic methods in combinatorial optimization},
  journal={Random Structures Algorithms},
  volume={20},
  date={2002},
  number={3},
  pages={403--440},
  issn={1042-9832},
}

\bib{MR3100033}{book}{
  author={Foucart, S.},
  author={Rauhut, H.},
  title={A mathematical introduction to compressive sensing},
  series={Applied and Numerical Harmonic Analysis},
  publisher={Birkh\"auser/Springer, New York},
  date={2013},
  pages={xviii+625},
  isbn={978-0-8176-4947-0},
  isbn={978-0-8176-4948-7},
}

\bib{14112404}{article}{
  author={Larsen, K. Green},
  author={Nelson, J.},
  title={The Johnson-Lindenstrauss lemma is optimal for linear dimensionality reduction},
 booktitle = {43rd International Colloquium on Automata, Languages, and Programming,
               {ICALP} 2016, July 11-15, 2016, Rome, Italy},
  pages     = {82:1--82:11},
  year      = {2016},
%  crossref  = {DBLP:conf/icalp/2016},
 % url       = {http://dx.doi.org/10.4230/LIPIcs.ICALP.2016.82},
  doi       = {10.4230/LIPIcs.ICALP.2016.82},
}

\bib{13051786}{article}{
  author={{Jacques}, L.},
  author={{Degraux}, K.},
  auth={ De Vleeschouwer, C.},
  title={Quantized Iterative Hard Thresholding: Bridging 1-bit and High-Resolution Quantized Compressed Sensing},
  book={10th international conference on Sampling Theory and Applications (SampTA 2013)},
  page={105-108},
  year={2013},
}

\bib{MR3043783}{article}{
  author={Jacques, L.},
  author={Laska, J. N.},
  author={Boufounos, P. T.},
  author={Baraniuk, R. G.},
  title={Robust 1-bit compressive sensing via binary stable embeddings of sparse vectors},
  journal={IEEE Trans. Inform. Theory},
  volume={59},
  date={2013},
  number={4},
  pages={2082--2102},
  issn={0018-9448},
}

\bib{MR2280380}{article}{
  author={Leopardi, P.},
  title={A partition of the unit sphere into regions of equal area and small diameter},
  journal={Electron. Trans. Numer. Anal.},
  volume={25},
  date={2006},
  pages={309--327 (electronic)},
  issn={1068-9613},
}

\bib{MR2582801}{article}{
  author={Leopardi, P.},
  title={Diameter bounds for equal area partitions of the unit sphere},
  journal={Electron. Trans. Numer. Anal.},
  volume={35},
  date={2009},
  pages={1--16},
  issn={1068-9613},
}

\bib{MR1697825}{book}{
  author={Matou{\v {s}}ek, J.},
  title={Geometric discrepancy},
  series={Algorithms and Combinatorics},
  volume={18},
  %note={An illustrated guide},
  publisher={Springer-Verlag, Berlin},
  date={1999},
  pages={xii+288},
  isbn={3-540-65528-X},
}

\bib{MR2995229}{article}{
  author={Naor, A.},
  title={An introduction to the Ribe program},
  journal={Jpn. J. Math.},
  volume={7},
  date={2012},
  number={2},
  pages={167--233},
  issn={0289-2316},
}

\bib{151000251}{article}{
  author={{Pausinger}, F.},
  author={Steinerberger, S.},
  title={On the discrepancy of jittered sampling},
  journal={Journal of Complexity},
  date={2016},
  eprint={https://arxiv.org/abs/1510.00251},
}

\bib{MR3069959}{article}{
  author={Plan, Y.},
  author={Vershynin, R.},
  title={One-bit compressed sensing by linear programming},
  journal={Comm. Pure Appl. Math.},
  volume={66},
  date={2013},
  number={8},
  pages={1275--1297},
}

\bib{MR3008160}{article}{
  author={Plan, Y.},
  author={Vershynin, R.},
  title={Robust 1-bit compressed sensing and sparse logistic regression: a convex programming approach},
  journal={IEEE Trans. Inform. Theory},
  volume={59},
  date={2013},
  number={1},
  pages={482--494},
  issn={0018-9448},
}

\bib{MR3164174}{article}{
  author={Plan, Y.},
  author={Vershynin, R.},
  title={Dimension reduction by random hyperplane tessellations},
  journal={Discrete Comput. Geom.},
  volume={51},
  date={2014},
  number={2},
  pages={438--461},
}

\bib{150403029}{article}{
  author={{Reznikov}, A.},
  author={Saff, E.~B.},
  title={The covering radius of randomly distributed points on a manifold},
  date={2016},
  eprint={https://arxiv.org/abs/1504.03029},
}

\bib{MR2199631}{article}{
  author={Schechtman, G.},
  title={Two observations regarding embedding subsets of Euclidean spaces in normed spaces},
  journal={Adv. Math.},
  volume={200},
  date={2006},
  number={1},
  pages={125--135},
}

\bib{MR0333995}{article}{
  author={Stolarsky, K. B.},
  title={Sums of distances between points on a sphere. II},
  journal={Proc. Amer. Math. Soc.},
  volume={41},
  date={1973},
  pages={575--582},
  issn={0002-9939},
}

\bib{MR2168892}{book}{
   author={Tabachnikov, S.},
   title={Geometry and billiards},
   series={Student Mathematical Library},
   volume={30},
   publisher={AMS, Providence, RI},
   date={2005},
   pages={xii+176},
   }

\bib{MR2848161}{book}{
  author={Temlyakov, V.},
  title={Greedy approximation},
  series={Cambridge Monographs on Applied and Computational Mathematics},
  volume={20},
  publisher={Cambridge University Press, Cambridge},
  date={2011},
  pages={xiv+418},
}

\bib{To}{article}{
  title = {Reformulation of the covering and quantizer problems as ground states of interacting particles},
  author = {Torquato, S.},
  journal = {Phys. Rev. E},
  volume = {82},
  issue = {5},
  pages = {56--109},
  numpages = {22},
  year = {2010},
%  month = {Nov},
 % publisher = {American Physical Society},
  %doi = {10.1103/PhysRevE.82.056109},
  %url = {http://link.aps.org/doi/10.1103/PhysRevE.82.056109}
}

\bib{MR2963170}{article}{
  author={Vershynin, R.},
  title={Introduction to the non-asymptotic analysis of random matrices},
  conference={ title={Compressed sensing}, },
  book={ publisher={Cambridge Univ. Press, Cambridge}, },
  date={2012},
  pages={210--268},
}

\end{biblist}
 \end{bibdiv} 
 
 \end{document}